\definecolor{labelkey}{rgb}{0.6,0,1}
\theoremstyle{plain}
\newtheorem{theorem}{Theorem}[section]
\newtheorem{lemma}[theorem]{Lemma}
\newtheorem{proposition}[theorem]{Proposition}
\newtheorem{corollary}[theorem]{Corollary}
\theoremstyle{definition}
\newtheorem{definition}[theorem]{Definition}
\def\bhyp#1{\begin{equation}\label{#1}\begin{array}{c}}
\def\ehyp{\end{array}\end{equation}}
\newcounter{cst}
\def \ctel#1{C_{\refstepcounter{cst}\label{#1}\thecst}}
\def \cter#1{C_{\ref{#1}}}
\theoremstyle{remark}
\newtheorem{remark}{Remark}[section]
\numberwithin{equation}{section}
\numberwithin{figure}{section}
\newcommand{\cM}{{\mathcal M}}
\newcommand{\RR}{{\mathbb R}}
\newcommand{\NN}{{\mathbb N}}
\def\O{\Omega}
\def\dsp{\displaystyle}
\definecolor{violet}{rgb}{0.580,0.,0.827}
\DeclareMathOperator{\divergence}{\mathrm{div}} 
\newcommand{\ud}{\, \mathrm{d}} 
\def\div{\divergence}
\newcommand*\if@single[3]{%
  \setbox0\hbox{${\mathaccent"0362{#1}}^H$}%
  \setbox2\hbox{${\mathaccent"0362{\kern0pt#1}}^H$}%
  \ifdim\ht0=\ht2 #3\else #2\fi
  }
\newcommand*\rel@kern[1]{\kern#1\dimexpr\macc@kerna}
\newcommand*\widebar[1]{\@ifnextchar^{{\wide@bar{#1}{0}}}{\wide@bar{#1}{1}}}
\newcommand*\wide@bar[2]{\if@single{#1}{\wide@bar@{#1}{#2}{1}}{\wide@bar@{#1}{#2}{2}}}
\newcommand*\wide@bar@[3]{%
  \begingroup
  \def\mathaccent##1##2{%
    \if#32 \let\macc@nucleus\first@char \fi
    \setbox\z@\hbox{$\macc@style{\macc@nucleus}_{}$}%
    \setbox\tw@\hbox{$\macc@style{\macc@nucleus}{}_{}$}%
    \dimen@\wd\tw@
    \advance\dimen@-\wd\z@
    \divide\dimen@ 3
    \@tempdima\wd\tw@
    \advance\@tempdima-\scriptspace
    \divide\@tempdima 10
    \advance\dimen@-\@tempdima
    \ifdim\dimen@>\z@ \dimen@0pt\fi
    \rel@kern{0.6}\kern-\dimen@
    \if#31
      \overline{\rel@kern{-0.6}\kern\dimen@\macc@nucleus\rel@kern{0.4}\kern\dimen@}%
      \advance\dimen@0.4\dimexpr\macc@kerna
      \let\final@kern#2%
      \ifdim\dimen@<\z@ \let\final@kern1\fi
      \if\final@kern1 \kern-\dimen@\fi
    \else
      \overline{\rel@kern{-0.6}\kern\dimen@#1}%
    \fi
  }%
  \macc@depth\@ne
  \let\math@bgroup\@empty \let\math@egroup\macc@set@skewchar
  \mathsurround\z@ \frozen@everymath{\mathgroup\macc@group\relax}%
  \macc@set@skewchar\relax
  \let\mathaccentV\macc@nested@a
  \if#31
    \macc@nested@a\relax111{#1}%
  \else
    \def\gobble@till@marker##1\endmarker{}%
    \futurelet\first@char\gobble@till@marker#1\endmarker
    \ifcat\noexpand\first@char A\else
      \def\first@char{}%
    \fi
    \macc@nested@a\relax111{\first@char}%
  \fi
  \endgroup
}
\title[Porous media flow with measure data]{On a miscible displacement model in porous media flow with measure data}
\author{J\'er\^ome Droniou}
\address[J\'er\^ome Droniou]{School of Mathematical Sciences, Monash University, Victoria 3800, Australia.}
\email{jerome.droniou@monash.edu}
\author{Kyle S. Talbot}
\address[Kyle S. Talbot]{School of Mathematical Sciences, Monash University, Victoria 3800, Australia.}
\email{kyle.talbot@monash.edu}
\subjclass[2010]{35D30, 35K65, 35K67, 76S05}
\keywords{flow in porous medium, elliptic-parabolic system, degenerate equations,
existence, measure data}
\date{\today}
\begin{document}


\begin{abstract}
We establish the existence of a solution to a non-linearly coupled elliptic-parabolic system of PDEs describing the single-phase, miscible displacement of one incompressible fluid by another in a porous medium. We consider a velocity-dependent diffusion-dispersion tensor and model the action of injection and production wells by measures on the domain. We obtain the solution by passing to the limit on problems with regularised well data.  
\end{abstract}

\maketitle


\section{Introduction} \label{sec:intro}

The single-phase, miscible displacement of one incompressible fluid by another in a porous medium occurs during the enhanced oil recovery process in petroleum engineering. Engineers displace the oil in a reservoir by the injection of a fluid, such as a polymeric solvent, into designated injection wells. A mixture of the oil and the invading fluid is then collected at production wells. Under the appropriate physical hypotheses, such a miscible displacement problem is described by a non-linearly coupled elliptic-parabolic system, occasionally referred to as the Peaceman model after it was first introduced by Peaceman and Rachford in \cite{pr62}. We refer the reader to \cite{bea72,cj86,pe77} for further details.

The reservoir in the physical problem is represented by a bounded domain $\Omega \subset \mathbb{R}^d$ ($d=2$ or $3$) with Lipschitz continuous boundary $\partial \Omega$, and we denote by $(0, T)$ the time interval over which the process occurs. 

In that which follows, $\Phi$ denotes the porosity of the reservoir and $\mathbf{K}$ the absolute permeability of the reservoir. The Darcy velocity of the fluid mixture is denoted by $\mathbf{u}$, and the concentration-dependent quantities of viscosity and density by $\mu$ and $\rho$, respectively. We write $\mathbf{g}$ for the constant, downward-pointing gravitational vector. The main unknowns of the problem are the pressure $p$ of the fluid mixture and the concentration $c$ of one of the components in the mixture.  With this notation in place, the model is given by
\begin{equation}
\left.
\begin{array}{r}
\displaystyle{\mathbf{u}(x, t) = -\frac{\mathbf{K}(x)}{\mu(c(x, t))}\big(\nabla p(x, t) - \rho(c(x, t))\mathbf{g}\big)} \\[0.5cm]
\displaystyle{\divergence\mathbf{u}(x, t) = (q^{I} - q^{P})(x, t)}
\end{array}
\right\}, \mbox{\quad $(x, t) \in \Omega \times (0, T)$,} \label{eq:darcy}
\end{equation}
\begin{multline}
\Phi(x)\partial_{t}c(x, t) - \divergence\big(\mathbf{D}(x, \mathbf{u}) \nabla c - c\mathbf{u}\big)(x, t) + (q^{P}c)(x, t) = (\hat{c}q^{I})(x, t),\\
\mbox{\quad $(x, t) \in\Omega \times (0, T)$.}\label{eq:conc}
\end{multline}
The sums of injection well source terms and production well sink terms are given by $q^{I}$ and $q^{P}$, respectively, and the concentration of the injected fluid is $\hat c$. After Peaceman \cite{pe66}, the diffusion-dispersion tensor $\mathbf{D}$ frequently assumes the form
\begin{equation*}
\mathbf{D}(x, \mathbf{u}) = \Phi(x)\bigg(d_{m}\mathbf{I} + |\mathbf{u}|\Big(d_{l}E(\mathbf{u}) + d_{t}(\mathbf{I} - E(\mathbf{u}))\Big)\bigg), 
\end{equation*}
where $\mathbf{I}$ is the identity matrix, $d_m > 0$ the molecular diffusion coefficient and $d_l > 0$ and $d_t > 0$ the longitudinal and transverse mechanical dispersion coefficients, respectively. In practice, $d_{m}$ is very small relative to the mechanical dispersion coefficients, and $d_l$ is much larger than $d_t$ \cite{dew83}. The matrix $E(\mathbf{u})$ is the projection along the direction of flow:
\begin{equation*}
E(\mathbf{u}) = \Big(\frac{\mathbf{u}_i \mathbf{u}_j}{|\mathbf{u}|^{2}} \Big)_{1 \leq i,j \leq d}. 
\end{equation*}

Following \cite{kov63}, the viscosity is usually given by
\begin{equation*}
\mu(c) = \mu(0)\Big(1 + \big(M^{1/4} - 1\big)c\Big)^{-4} \mbox{\quad for $c \in [0, 1]$,} 
\end{equation*}
where $M = \mu(0)/\mu(1)$ is the mobility ratio. Note that if $M=1$ the system \eqref{eq:darcy}--\eqref{eq:conc} is decoupled.

We assume the reservoir boundary $\partial \Omega$ to be impermeable, which yields no-flow boundary conditions for \eqref{eq:darcy}--\eqref{eq:conc}:
\begin{align}
\mathbf{u}(x, t) \cdot \mathbf{n} &= 0, \mbox{\quad $(x, t) \in \partial \Omega \times (0, T)$, and} \label{eq:bc1}\\
\mathbf{D}(x, \mathbf{u})\nabla c(x, t) \cdot \mathbf{n} &= 0, \mbox{\quad $(x, t) \in \partial\Omega \times (0, T)$,} \label{eq:bc2}
\end{align}
where $\mathbf{n}$ is the exterior unit normal to $\partial \Omega$. The first of these enforces a compatibility condition upon the source terms:
\begin{equation*}
\int_{\Omega} q^{I}(x, t) \ud x = \int_{\Omega} q^{P}(x, t) \ud x \mbox{\quad for all $t \in (0, T)$,} 
\end{equation*}
and we normalise the pressure to eliminate any arbitrary constants:
\begin{equation}
\int_{\Omega} p(x, t) \ud x = 0 \mbox{\quad for all $t \in (0, T)$.} \label{eq:normp}
\end{equation}
Additionally, we specify an initial condition for the concentration:
\begin{equation}
c(x,0) = c_{0}(x), \mbox{ \quad $x \in \Omega$.} \label{eq:ic}
\end{equation}

The source terms $q^{I}$ and sink terms $q^{P}$, henceforth collectively referred to as `source terms', give the flow rates of the injected and produced fluids at the respective wells. The principle difficulty in their mathematical representation is that of scale. Typical wellbore diameters are fractions of a metre, whereas the diameter of a reservoir may be up to thousands of metres. At the former scale, a source term may be adequately modelled as a classical function supported on the wellbore. However, at the reservoir scale -- that is, on the order of a typical mesh size of a discretisation used in reservoir simulation -- the action of a well is effectively that of a (spatial) measure supported at a point in two-dimensional models, or on a line in three-dimensional models. 

Studies addressing the well-posedness of the Peaceman model are few relative to numerical treatments of the problem; see Feng \cite{fe02} for a survey of existence results. We note that Sammon \cite{sam86} models the wells as mollified Dirac masses and assumes the diffusion coefficient $\mathbf{D}$ to be independent of velocity. Subsequent work by Mikeli\'c \cite{mik91} on the stationary problem with a slightly regularised diffusion-dispersion tensor models the wells as non-negative elements of $L^{r}(\Omega)$, $r \in (d, \infty)$. Both Feng \cite{fe95} and Chen and Ewing \cite{ce99} consider the diffusion-dispersion tensor as given above; the former takes $q^{I}, q^{P} \in L^{\infty}(0,T; L^{2}(\Omega))$, whereas the latter relaxes the regularity of the production well to $q^{P} \in L^{\infty}(0,T; H^{-1}(\Omega))$ and establishes existence for a variety of boundary conditions. Amirat and Ziani \cite{az04} assume $L^{\infty}(0,T; L^{2}(\Omega))$ regularity of the wells and prove the existence of a weak solution when $d_{m}=0$, a condition motivated by the negligible contribution at high flow velocities of molecular diffusion relative to mechanical dispersion. 

Under the assumption that $\mathbf{D}$ is bounded, Fabrie and Gallou\"et \cite{fg00} provide the first existence result for miscible displacement with wells modelled by measures. However, this assumption on $\mathbf{D}$ is incompatible with the form of Peaceman's tensor above since the Darcy velocity $\mathbf{u}$ is potentially unbounded. Choquet \cite{ch04} considers this issue in the context of a compressible model of radionuclide transport with wells modelled by measures. The coupling of the equations in such compressible models is stronger than in the incompressible model we study in this paper, however it also provides estimates on the time derivative of the pressure which entail straightforward compactness results on this unknown. For the incompressible model \eqref{eq:darcy}--\eqref{eq:conc}, additional arguments are required to establish a convergence result on the pressure that permits passage to the limit in the coupled system.

In this paper we extend the work of \cite{fg00} to establish the existence of a weak solution to \eqref{eq:darcy}--\eqref{eq:ic} with a diffusion-dispersion tensor that generalises that of Peaceman and with wells modelled by measures. Section \ref{sec:assump} lists the assumptions on the data and details the main result. In Section \ref{sec:regprob} we show that a weak solution exists for an approximate problem with regularised wells, which gives an alternative proof of a similar result of \cite{ce99}. We prove our main result in Section \ref{sec:measprob} by passing to the limit on the approximate problem using some estimates and a stability result developed therein, the latter of which in particular allows us to dispense with an additional regularity assumption on the source terms required by \cite{fg00}. The principal novelty of our work is perhaps in the treatment of the diffusion-dispersion term when passing to the limit, which is accomplished with reference to some technical lemmas presented in Appendix \ref{sec:app}.


\section{Assumptions and main result} \label{sec:assump}

The assumptions on the data are as follows.
\bhyp{hyp:domain}
T \in \RR_{+}^{\ast} \mbox{ and }\Omega \mbox{ is a bounded, open subset of }\RR^{d}, d\geq1, \\
\mbox{ with a Lipschitz continuous boundary.}
\ehyp
Denoting by $\mathcal{S}_d(\RR)$ the set of $d\times d$ symmetric matrices, the permeability
satisfies
\bhyp{hyp:K}
\mathbf{K}: \Omega \to \mathcal{S}_{d}(\RR) \mbox{ is measurable and}\\
\exists k_{\ast}>0 \mbox{ such that, for a.e. }x \in \Omega \mbox{ and }\forall \xi \in \RR^{d},\\
k_{\ast}|\xi|^{2} \leq \mathbf{K}(x)\xi \cdot \xi \leq k_{\ast}^{-1}|\xi|^{2}.
\ehyp
The assumptions on the porosity, density and viscosity are quite general:
\bhyp{hyp:porosity}
\Phi \in L^{\infty}(\Omega) \mbox{ and there exists } \phi_{\ast} > 0 \mbox{ such that for a.e. $x \in \Omega$, } \\
\phi_{\ast} \leq \Phi(x) \leq \phi_{\ast}^{-1},
\ehyp
\bhyp{hyp:densvisc}
\rho \in C([0,1], \mathbb{R}),\, \mu \in C([0,1],(0,\infty)). 
\ehyp
The diffusion-dispersion tensor satisfies
\bhyp{hyp:D}
\mathbf{D}: \Omega \times \RR^{d} \to \mathcal{S}_{d}(\RR) \mbox{ is a Carath\'eodory function such that}\\
\exists \alpha_{\mathbf{D}} > 0, \, \exists \Lambda_{\mathbf{D}} > 0 \mbox{ such that, for a.e. $x \in \Omega$ and all }\zeta, \xi \in \RR^{d},\\
\mathbf{D}(x, \zeta)\xi \cdot \xi \geq \alpha_{\mathbf{D}}(1 + |\zeta|)|\xi|^{2} \mbox{ and } |\mathbf{D}(x,\zeta)|\leq \Lambda_{\mathbf{D}}(1 + |\zeta|).
\ehyp
Finally, the injected and initial concentration are such that
\bhyp{hyp:injectedconc}
\hat c \in L^{\infty}(0,T; C(\widebar \Omega)) \mbox{ satisfies } 0 \leq \hat c(x, t) \leq 1 \mbox{ for a.e. }(x, t) \in \Omega \times (0, T),
\ehyp
\bhyp{hyp:initialconc}
c_{0} \in L^{\infty}(\Omega) \mbox{ satisfies } 0 \leq c_{0}(x) \leq 1 \mbox{ for a.e. } x \in \Omega,
\ehyp
and the source terms are such that
\bhyp{hyp:ab}
q^{I} = a\nu \mbox{ and }q^{P} = b\nu, \mbox{ where }\\
\nu\in \cM_+(\Omega)\mbox{ and }a, b \in L^{\infty}(0,T; C(\widebar\Omega)) \mbox{ satisfy:}\\
a(x, t) \geq 0, \quad b(x, t) \geq 0 \quad \forall (x, t) \in \Omega \times (0, T),
\ehyp
where $\cM_{+}(\Omega)$ is the set of bounded non-negative Radon measures on $\Omega$.
The compatibility condition imposed by \eqref{eq:bc1} becomes
\begin{equation}
\int_{\Omega}a(x, t)\ud \nu(x) = \int_{\Omega}b(x, t)\ud \nu(x) \quad \forall t \in (0, T). \label{hyp:compatibility}
\end{equation}
For a topological vector space $X(\Omega)$ of functions on $\Omega$, we write $(X(\Omega))'$ for its topological dual and omit $\Omega$ when writing the duality pairing: $\langle \cdot , \cdot \rangle_{(X(\Omega))', X(\Omega)} = \langle \cdot , \cdot \rangle_{X', X}$. For $z \in [1, \infty)$, we write $z'=\frac{z}{z-1}$ for its conjugate exponent. When a constant appears in an estimate we track only its relevant dependencies, which are always non-decreasing. 
\begin{remark}
The particular forms of the viscosity and the diffusion-dispersion tensor given in Section \ref{sec:intro} satisfy the assumptions \eqref{hyp:densvisc}, \eqref{hyp:D}; for the latter, set $\alpha_{\mathbf{D}} = \phi_{\ast}\inf(d_{m}, d_{l}, d_{t})$ and $\Lambda_{\mathbf{D}} = \phi_{\ast}^{-1}\sup(d_{m}, d_{l}, d_{t})$.
\end{remark}

Under these assumptions, we will consider the following notion of solutions to the Peaceman model.


\begin{definition} \label{def:weaksolmes}
Under the hypotheses \eqref{hyp:domain}--\eqref{hyp:compatibility}, a weak solution of \eqref{eq:darcy}--\eqref{eq:ic} is a triple $(p, \mathbf{u}, c)$ satisfying 
\begin{equation}
\begin{array}{c}
c \in L^{2}(0,T; H^{1}(\Omega)), \quad 0 \leq c \leq 1, \\
c \in L^{\infty}(0,T; L^{1}(\Omega, \nu)), \\
0 \leq c(x,t) \leq 1, \mbox{\quad for $\nu$-a.e. $x \in \Omega$, for a.e. $t \in (0,T)$,}
\end{array} \label{eq:measc}
\end{equation}
\begin{equation}
\Phi \partial_{t}c \in  L^{2}(0,T; (W^{1, s}(\Omega))'), \quad \forall s > 2d, \label{eq:measdtc}
\end{equation}
\begin{equation}
\Phi c \in C([0, T];(W^{1,s}(\Omega))'), \quad \Phi c(\cdot, 0) = \Phi c_{0} \mbox{\quad in $(W^{1, s}(\Omega))'$}, \quad \forall s > 2d,\label{eq:measic}
\end{equation}
\begin{equation}
\mathbf{D}(\cdot, \mathbf{u})\nabla c \in L^{2}(0,T; L^{r}(\Omega)^{d}), \quad \forall r < \frac{2d}{2d-1}, \label{eq:measdgradc}
\end{equation}
\begin{equation}
p \in L^{\infty}(0,T; W^{1, q}(\Omega)), \quad \mathbf{u} \in L^{\infty}(0,T; L^{q}(\Omega)^d), \quad \forall q < \frac{d}{d-1}, \label{eq:measpu}
\end{equation}
\begin{equation}
\begin{array}{l}
\dsp \int_{0}^{T}\langle \Phi \partial_{t}c(\cdot, t), \varphi(\cdot, t)\rangle_{(W^{1, s})', W^{1, s}}\ud t \\
\dsp + \int_{0}^{T}\int_{\Omega} \mathbf{D}(x, \mathbf{u}(x, t))\nabla c(x, t) \cdot \nabla \varphi(x, t)\ud x \ud t  \\
\dsp - \int_{0}^{T}\int_{\Omega} \mathbf{u}(x, t)\cdot \nabla \varphi(x, t)c(x, t) \ud x \ud t \\
\dsp + \int_{0}^{T}\int_{\Omega} c(x, t)\varphi(x, t)b(x, t)\ud \nu(x) \ud t\\
\dsp =\int_{0}^{T}\int_{\Omega} \hat c(x, t)\varphi(x, t)a(x, t)\ud \nu(x) \ud t,
  \ \forall \varphi \in L^{2}(0,T; W^{1, s}(\Omega)), \forall s>2d,
\end{array}
\label{eq:measpara}
\end{equation}
\begin{equation}
\begin{array}{l}
\dsp \mathbf{u}(x, t) =  -\frac{\mathbf{K}(x)}{\mu(c(x, t))}(\nabla p(x,t) - \rho(c(x, t))\mathbf{g}),  \\
\dsp - \int_{0}^{T}\int_{\Omega}\mathbf{u}(x, t) \cdot \nabla \psi(x, t) \ud x \ud t = \int_{0}^{T}\int_{\Omega}\psi(x, t) a(x, t)\ud \nu(x) \ud t  \\
\dsp \qquad - \int_{0}^{T}\int_{\Omega}\psi(x, t)b(x, t)\ud \nu(x) \ud t, \quad \forall \psi \in \bigcap_{q>d} L^{1}(0,T; W^{1,q}(\Omega)).
\end{array}\label{eq:measell} 
\end{equation}
\end{definition}

The main result of this article is the following existence result.
\begin{theorem} \label{th:main}
Under the hypotheses \eqref{hyp:domain}--\eqref{hyp:compatibility}, there exists a weak solution to \eqref{eq:darcy}--\eqref{eq:ic} in the sense of Definition \ref{def:weaksolmes}. 
\end{theorem}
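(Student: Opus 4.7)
The natural strategy is to approximate the measure data by regular source terms, apply the existence theory of Section~\ref{sec:regprob}, and pass to the limit. Specifically, choose non-negative $\nu_n \in L^\infty(\Omega)$ with $\nu_n \rightharpoonup \nu$ weakly-$\ast$ in $\cM(\Omega)$ (for instance, by mollification), and renormalise so that the compatibility condition \eqref{hyp:compatibility} continues to hold. Setting $q^I_n = a\nu_n$ and $q^P_n = b\nu_n$, the existence result of Section~\ref{sec:regprob} yields, for each $n$, a weak solution $(p_n, \mathbf{u}_n, c_n)$ to the corresponding regularised problem.

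The next step is to derive a priori estimates uniform in $n$. A truncation argument in the parabolic equation, using the non-negativity of $a$, $b$ and of $\nu_n$, and $\hat c \in [0,1]$, yields $0 \le c_n \le 1$ almost everywhere. Testing the parabolic equation by $c_n$ and invoking the coercivity in \eqref{hyp:D} gives a uniform bound on $\sqrt{1+|\mathbf{u}_n|}\,|\nabla c_n|$ in $L^2((0,T)\times\Omega)$, and in particular on $\nabla c_n$ in $L^2(0,T; L^2(\Omega)^d)$. A Boccardo--Gallou\"et type estimate applied pointwise in time to \eqref{eq:measell}, exploiting the uniform ellipticity of $\mathbf{K}/\mu(c_n)$, gives $p_n$ bounded in $L^\infty(0,T; W^{1,q}(\Omega))$ and $\mathbf{u}_n$ bounded in $L^\infty(0,T; L^q(\Omega)^d)$ for every $q < d/(d-1)$. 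H\"older's inequality then controls $\mathbf{D}(\cdot,\mathbf{u}_n)\nabla c_n$ in $L^2(0,T; L^r(\Omega)^d)$ for every $r < 2d/(2d-1)$, and returning to the parabolic equation yields a uniform bound on $\Phi\partial_t c_n$ in $L^2(0,T; (W^{1,s}(\Omega))')$ for $s > 2d$. An Aubin--Simon argument then provides, along a subsequence, weak limits $(p, \mathbf{u}, c)$ in the target function spaces of Definition~\ref{def:weaksolmes}, together with strong convergence of $c_n$ in $L^2((0,T)\times\Omega)$ and almost everywhere.

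The principal obstacle is the passage to the limit in the nonlinear coupled terms, most critically the diffusion--dispersion term $\mathbf{D}(\cdot,\mathbf{u}_n)\nabla c_n$, whose coefficient grows linearly in $|\mathbf{u}_n|$ while $\mathbf{u}_n$ is only controlled in $L^q$ with $q < d/(d-1) \le 2$. The plan is first to upgrade the convergence of the velocity: combining the strong convergence of $c_n$ (hence of $\mu(c_n)$ and $\rho(c_n)$) with a stability result for the elliptic equation with measure data -- to be developed in Section~\ref{sec:measprob}, replacing the extra regularity assumption used in \cite{fg00} -- yields strong convergence of $\nabla p_n$, and therefore of $\mathbf{u}_n$, in $L^q$ for every $q < d/(d-1)$. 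This is enough to pass to the limit in \eqref{eq:measell}. For the diffusion--dispersion term in \eqref{eq:measpara}, strong convergence of $\mathbf{u}_n$ gives $\mathbf{D}(\cdot,\mathbf{u}_n) \to \mathbf{D}(\cdot,\mathbf{u})$ almost everywhere, and the technical lemmas of Appendix~\ref{sec:app} -- which allow one to combine the weighted bound on $\sqrt{1+|\mathbf{u}_n|}\,|\nabla c_n|$ with the weak convergence of $\nabla c_n$ despite the unbounded growth of $\mathbf{D}$ -- yield $\mathbf{D}(\cdot,\mathbf{u}_n)\nabla c_n \rightharpoonup \mathbf{D}(\cdot,\mathbf{u})\nabla c$ in $L^2(0,T; L^r(\Omega)^d)$. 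Finally, the initial condition \eqref{eq:measic} is recovered from the time continuity of $\Phi c$ in $(W^{1,s}(\Omega))'$ and passage to the limit in $\Phi c_n(\cdot,0) = \Phi c_0$.
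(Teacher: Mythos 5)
Your strategy coincides with the one the paper follows: regularise $\nu$ (adjusting the injection coefficient so that \eqref{hyp:compatibility} survives the approximation), invoke the existence result for the regularised problem, derive $n$-uniform energy, elliptic ($W^{1,q}$, $q<d/(d-1)$) and time-derivative estimates, apply Aubin compactness, upgrade the convergence of the Darcy velocity to a strong one via a Stampacchia-type stability result for the elliptic equation with measure data (this is exactly the role of Proposition \ref{prop:stab}, applied at a.e.\ fixed time and followed by dominated convergence in $t$), and finally treat the diffusion--dispersion term by exploiting the weighted gradient bound together with the appendix lemmas (the paper's mechanism is the splitting $\mathbf{D}=\mathbf{D}^{1/2}\mathbf{D}^{1/2}$ with Corollary \ref{cor:h} and a double application of Lemma \ref{lem:ws}, which is what your ``combine the weighted bound with the weak convergence of $\nabla c_n$'' amounts to). All of this is sound and matches the paper.

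There is, however, one genuine omission: the production-well term $\int_0^T\int_\Omega c\,\varphi\, b\,\ud\nu\,\ud t$ in \eqref{eq:measpara}, together with the requirements in the last two lines of \eqref{eq:measc} that $c\in L^\infty(0,T;L^1(\Omega,\nu))$ and $0\le c\le 1$ $\nu$-a.e. Your list of convergences (weak in $L^2(0,T;H^1)$, strong in $L^2$, a.e.\ with respect to Lebesgue measure) says nothing about the values of $c$ on Lebesgue-null sets, yet $\nu$ may be singular (a Dirac mass at a well in dimension $2$, a line measure in dimension $3$), so the limit expression is not even well defined from what you have established, and the convergence of $\int_0^T\int_\Omega c_n\varphi b\,\nu_n\,\ud x\,\ud t$ to it does not follow from a weak-$\ast$ times a.e.\ argument. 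An additional step is needed to show that $c$ admits a representative, specified on Lebesgue-null sets, for which these properties hold and for which the limit of the approximate production terms is identified correctly; the paper handles this by appealing to Lemma 5.1 of Fabrie--Gallou\"et. Without some such argument your passage to the limit in the parabolic equation is incomplete precisely at the point where the measure data enters the lower-order terms.
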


\begin{remark}
The proof actually shows that $(p,\mathbf{u})$ is a solution to \eqref{eq:darcy}--\eqref{eq:bc1} in a stronger sense than what we state in 
\eqref{eq:measell}; see the proof of Proposition \ref{prop:stab}.
\end{remark}

We prove Theorem \ref{th:main} by passing to the limit on problems with a regularised $\nu$ as defined in Proposition \ref{prop:dreg} below. Whilst the result of this proposition is already established in \cite{ce99}, we give an alternative proof based on the regularisation method we then use to prove Theorem \ref{th:main}. This serves to present the estimates we require and illustrate the techniques we use for the singular problem, in particular the technical lemmas of Appendix \ref{sec:app}.
 
\begin{proposition} \label{prop:dreg}
Let \eqref{hyp:domain}--\eqref{hyp:compatibility} hold and suppose that $\nu \in L^{2}(\Omega)$. Then there exists a solution $(\widetilde{p}, \widetilde{\mathbf{u}}, \widetilde{c})$ to \eqref{eq:darcy}--\eqref{eq:ic} in the following sense:
\begin{equation}
\widetilde{c} \in L^{2}(0,T; H^{1}(\Omega)), \quad 0 \leq \widetilde{c} \leq 1, \label{eq:regc}
\end{equation}
\begin{equation}
\Phi \partial_{t}\widetilde{c} \in L^{2}(0,T; (W^{1, 4}(\Omega))'), \label{eq:regdtc}
\end{equation}
\begin{equation}
\Phi \widetilde{c} \in C([0, T]; (W^{1, 4}(\Omega))'), \quad \Phi\widetilde{c}(\cdot, 0) = \Phi c_{0} \mbox{\quad in $(W^{1, 4}(\Omega))'$}, \label{eq:regic}
\end{equation}
\begin{equation}
\mathbf{D}(\cdot, \widetilde{\mathbf{u}})\nabla \widetilde{c} \in L^{2}(0,T; L^{4/3}(\Omega)^{d}), \label{eq:dgradc}
\end{equation}
\begin{equation}
\widetilde{p} \in L^{\infty}(0,T; H^{1}(\Omega)), \quad \widetilde{\mathbf{u}} \in L^{\infty}(0,T; L^{2}(\Omega)^d), \label{eq:regpu}
\end{equation}
\begin{equation}
\begin{array}{l}
\dsp \int_{0}^{T}\langle \Phi \partial_{t}\widetilde{c}(\cdot, t), \varphi(\cdot, t)\rangle_{(W^{1, 4})', W^{1, 4}}\ud t \\
\dsp + \int_{0}^{T}\int_{\Omega} \mathbf{D}(x, \widetilde{\mathbf{u}}(x, t))\nabla \widetilde{c}(x, t) \cdot \nabla \varphi(x, t)\ud x \ud t \\
\dsp - \int_{0}^{T}\int_{\Omega} \widetilde{\mathbf{u}}(x, t)\cdot \nabla \varphi(x, t)\widetilde{c}(x, t) \ud x \ud t \\
\dsp + \int_{0}^{T}\int_{\Omega} \widetilde{c}(x, t)\varphi(x, t)b(x, t)\nu(x)\ud x \ud t \\
\dsp =\int_{0}^{T}\int_{\Omega} \hat c(x, t)\varphi(x, t)a(x, t)\nu(x) \ud x \ud t , \quad \forall \varphi \in L^{2}(0,T; W^{1, 4}(\Omega)), 
\end{array}
\label{eq:regpara}
\end{equation}
\begin{equation}
\begin{array}{l}
\dsp \widetilde{\mathbf{u}}(x, t) =  -\frac{\mathbf{K}(x)}{\mu(\widetilde{c}(x, t))}(\nabla \widetilde{p}(x,t) - \rho(\widetilde{c}(x, t))\mathbf{g}),\\
\dsp - \int_{0}^{T}\int_{\Omega}\widetilde{\mathbf{u}}(x, t) \cdot \nabla \psi(x, t) \ud x \ud t = \int_{0}^{T}\int_{\Omega}\psi(x, t) a(x, t)\nu(x) \ud x \ud t  \\
\qquad \dsp - \int_{0}^{T}\int_{\Omega}\psi(x, t)b(x, t)\nu(x) \ud x \ud t, \quad \forall \psi \in L^{1}(0,T; H^{1}(\Omega)) . 
\end{array} 
\label{eq:regell}
\end{equation}
\end{proposition}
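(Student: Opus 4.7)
The plan is to obtain $(\widetilde p,\widetilde{\mathbf u},\widetilde c)$ by a Schauder fixed-point argument on the convex set $\mathcal{C}=\{\bar c \in L^2(0,T;L^2(\Omega)) : 0\leq \bar c \leq 1 \text{ a.e.}\}$, endowed with the $L^2(Q_T)$ topology, where $Q_T=\Omega\times(0,T)$. Given $\bar c \in \mathcal{C}$, one first solves the elliptic block with viscosity $\mu(\bar c)$ and density $\rho(\bar c)$: since $\nu \in L^2(\Omega)$ and $a,b \in L^\infty$, the source $(a-b)\nu$ lies in $L^\infty(0,T;L^2(\Omega))$, and the Lax--Milgram lemma applied slicewise in time (using \eqref{hyp:K}, \eqref{hyp:compatibility} and \eqref{eq:normp}) yields a unique mean-zero $\widetilde p \in L^\infty(0,T;H^1(\Omega))$ and Darcy velocity $\widetilde{\mathbf u} \in L^\infty(0,T;L^2(\Omega)^d)$, delivering \eqref{eq:regpu} and \eqref{eq:regell}. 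One then solves the parabolic block with $\widetilde{\mathbf u}$ fixed; to deal with the unboundedness of $\mathbf{D}(\cdot,\widetilde{\mathbf u})$, one truncates $\widetilde{\mathbf u}$ at level $n$ inside the diffusion tensor, runs a standard Galerkin scheme (the truncated coefficient being bounded), and passes to $n\to\infty$ using the uniform estimates below. The map $F:\bar c \mapsto c$ is defined as the output $c$ of this procedure.

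Three estimates are crucial. Testing with $(c-1)^+$ and $(-c)^+$ and using $0\leq \hat c \leq 1$, $0\leq c_0\leq 1$, $a,b\geq 0$ yields $0\leq c \leq 1$ a.e., so $F(\mathcal{C})\subset \mathcal{C}$. Testing with $c$ itself, combining the coercivity $\mathbf{D}(\cdot,\widetilde{\mathbf u})\nabla c\cdot \nabla c \geq \alpha_{\mathbf{D}}(1+|\widetilde{\mathbf u}|)|\nabla c|^2$ from \eqref{hyp:D} with the antisymmetry
\[
-\int_\Omega (\widetilde{\mathbf u}\cdot\nabla c)\,c \ud x = \tfrac{1}{2}\int_\Omega (a-b)\nu\,c^2 \ud x,
\]
which follows from $\divergence\widetilde{\mathbf u}=(a-b)\nu$ and $\widetilde{\mathbf u}\cdot\mathbf n=0$, produces the weighted bound $\int_0^T\!\int_\Omega (1+|\widetilde{\mathbf u}|)|\nabla c|^2\ud x \ud t \leq C$. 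This gives \eqref{eq:regc}; a H\"older interpolation of the type handled in Appendix \ref{sec:app}, combined with the $L^\infty(0,T;L^2)$ bound on $\widetilde{\mathbf u}$ and the growth condition in \eqref{hyp:D}, yields \eqref{eq:dgradc}. Since $W^{1,4}(\Omega)$ embeds into $L^\infty(\Omega)$ for $d\leq 3$, one can then read \eqref{eq:regdtc} off the equation, and \eqref{eq:regic} follows by integration in time from the Galerkin initial data.

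Compactness of $F$ in $L^2(Q_T)$ is immediate from Aubin--Lions applied with the compact embedding $H^1(\Omega)\hookrightarrow L^2(\Omega)$ and the continuous embedding $L^2(\Omega)\hookrightarrow (W^{1,4}(\Omega))'$. For continuity, one takes $\bar c_n \to \bar c$ in $L^2(Q_T)$: from the continuous dependence of the elliptic solve on its (uniformly bounded, almost-everywhere convergent) coefficients $\mu(\bar c_n)^{-1}$ and $\rho(\bar c_n)$ provided by \eqref{hyp:densvisc}, one deduces $\widetilde{\mathbf u}_n \to \widetilde{\mathbf u}$ strongly in $L^2(Q_T)^d$; extracting a convergent subsequence $c_n\to c$ in $L^2(Q_T)$ via the compactness, the technical lemmas of Appendix \ref{sec:app} then allow one to identify the weak limit of $\mathbf{D}(\cdot,\widetilde{\mathbf u}_n)\nabla c_n$ as $\mathbf{D}(\cdot,\widetilde{\mathbf u})\nabla c$, and thereby to pass to the limit in the parabolic equation. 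Schauder's theorem now produces a fixed point, which is the desired solution.

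The main obstacle is this last identification: because $\mathbf{D}$ grows linearly in $|\widetilde{\mathbf u}|$, naive weak-convergence arguments fail, and both the strong convergence of $\widetilde{\mathbf u}_n$ and the uniform integrability supplied by the weighted estimate on $(1+|\widetilde{\mathbf u}|)|\nabla c|^2$ are needed. The technical lemmas of Appendix \ref{sec:app} are designed exactly for this purpose, and the same machinery reappears, in a more delicate form, when handling the measure-data Theorem \ref{th:main}.
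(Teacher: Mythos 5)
Your overall strategy (decouple, truncate the tensor, estimate, pass to the limit) shares its main ingredients with the paper, but you assemble them in a different order, and that reordering creates a genuine gap. The paper performs the fixed point entirely at the level of the \emph{bounded} tensor $\mathbf{D}_k$ (this coupled existence is quoted from Fabrie--Gallou\"et), and only afterwards sends $k\to\infty$, extracting subsequences freely. You instead build the truncation limit $n\to\infty$ \emph{into the definition of the map} $F:\bar c\mapsto c$ and then apply Schauder to $F$. For this to work, $F$ must be single-valued and continuous, and both properties hinge on \emph{uniqueness} of the limit parabolic problem with the unbounded coefficient $\mathbf{D}(\cdot,\widetilde{\mathbf{u}})$ -- which you neither prove nor mention. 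Your "pass to $n\to\infty$ using the uniform estimates" produces a solution only along a subsequence, so the output of your procedure is a priori a nonempty \emph{set} of solutions; likewise your continuity argument ("extracting a convergent subsequence $c_n\to c$ ... identify the weak limit") only shows that cluster points of $F(\bar c_n)$ solve the problem for $\bar c$, not that $F(\bar c_n)\to F(\bar c)$. Without uniqueness this is at best upper semicontinuity of a set-valued map, and Schauder does not apply.

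The uniqueness in question is not routine. For fixed $\widetilde{\mathbf{u}}\in L^\infty(0,T;L^2(\Omega)^d)$ the equation is linear in $c$, but the natural energy identity for the difference of two solutions requires testing with that difference, and the solution class is $L^2(0,T;H^1(\Omega))$ with $\Phi\partial_t c\in L^2(0,T;(W^{1,4}(\Omega))')$, whereas the admissible test functions lie in $L^2(0,T;W^{1,4}(\Omega))$. The mismatch means the Lions integration-by-parts formula $\int_0^s\langle\Phi\partial_t w,w\rangle=\tfrac12\|\sqrt{\Phi}w(s)\|_{L^2}^2-\tfrac12\|\sqrt{\Phi}w(0)\|_{L^2}^2$ is not available in a standard Gelfand triple; the weighted space $\{v\in H^1(\Omega):(1+|\widetilde{\mathbf{u}}(\cdot,t)|)^{1/2}\nabla v\in L^2(\Omega)^d\}$ that would restore duality is time-dependent, so the classical framework does not apply directly. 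To repair the proof you should either establish this uniqueness (nontrivial), switch to a set-valued fixed point theorem with a convexity argument for the solution sets, or -- most simply -- run Schauder for each fixed truncation level $n$ (where $\mathbf{D}_n$ is bounded, the solution lives in the standard $H^1/(H^1)'$ duality, and uniqueness of the parabolic solve is classical) and only then pass to the limit $n\to\infty$ with your uniform estimates. That last option is precisely the architecture of the paper's proof.
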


\begin{remark} \label{rem:rem1}
It will be seen that in fact $\mathbf{D}^{1/2}(\cdot, \widetilde{\mathbf{u}})\nabla \widetilde{c} \in L^{2}(0,T; L^{2}(\Omega)^{d})$, which, together with \eqref{eq:regpu}, is stronger than \eqref{eq:dgradc}. 
\end{remark}
One obtains the existence of a solution to this regularised problem by passing to the limit on problems with bounded diffusion-dispersion tensors, detailed in the next section.


\section{Existence of a solution to the regularised problem} \label{sec:regprob}
\subsection{Approximating the diffusion-dispersion tensor}
For $k \in \RR_{+}$, consider the truncated diffusion-dispersion tensor
\begin{equation}
\mathbf{D}_{k}(x, \mathbf{u}) := \mathbf{D}\bigg(x, \frac{T_{k}(|\mathbf{u}|)}{|\mathbf{u}|}\mathbf{u}\bigg), \label{eq:dk}
\end{equation}
where $T_{k}: [0, \infty) \to [0,\infty)$ is the truncator
\begin{equation*}
T_{k}(s) = \left\{
\begin{array}{rl}
s & \text{if } s< k\\
k & \text{if } s\geq k.
\end{array} \right.
\end{equation*}
It is simple to verify that for each $k \in \RR_{+}$, $\mathbf{D}_{k}$ is a bounded Carath\'eodory function that satisfies
\begin{equation}
\mathbf{D}_{k}(x, \mathbf{u})\xi \cdot \xi \geq \alpha_{\mathbf{D}}|\xi|^{2}, \mbox{\quad for a.e. $x \in \Omega$, $\forall \xi \in \RR^d$}. \label{eq:dkassump}
\end{equation}
Furthermore, $\mathbf{D}_{k}$ can be estimated uniformly with respect to $k$: 
\begin{align}
|\mathbf{D}_{k}(x, \mathbf{u})| &\leq \Lambda_{\mathbf{D}}(1 + |\mathbf{u}|), \mbox{\quad for a.e. $x \in \Omega$},
\end{align}
and for a.e. $x \in \Omega$, $\mathbf{D}_{k}(x,\cdot) \to \mathbf{D}(x,\cdot)$ uniformly (as $k \to \infty$) on compact sets. 

Now, in \eqref{eq:darcy}--\eqref{eq:bc2} replace $\mathbf{D}$ by $\mathbf{D}_{k}$ and assume that \eqref{hyp:domain}--\eqref{hyp:compatibility} hold ($\mathbf{D}_{k}$ satisfies \eqref{hyp:D}). Assume also that $\nu \in L^{2}(\Omega)$ and $\Phi \equiv 1$ on $\Omega$. Then \cite{fg00} establishes the existence of a solution $(p_{k}, \mathbf{u}_{k}, c_{k})$  to \eqref{eq:darcy}--\eqref{eq:ic} in the following sense:
\begin{equation}
c_{k} \in L^{2}(0,T; H^{1}(\Omega))\cap C([0, T]; L^{2}(\Omega)), \quad 0 \leq c_{k} \leq 1, \label{eq:fgconc}
\end{equation}
\begin{equation}
\partial_{t}c_{k} \in L^{2}(0,T; (H^{1}(\Omega))'),
\end{equation}
\begin{equation}
c_{k}(\cdot, 0) = c_{0} \mbox{\quad in $L^{2}(\Omega)$},
\end{equation}
\begin{equation}
p_{k} \in L^{\infty}(0,T; H^{1}(\Omega)), \quad \mathbf{u}_{k} \in L^{\infty}(0,T; L^{2}(\Omega)^d),
\end{equation}
\begin{equation}
\begin{array}{l}
\dsp \int_{0}^{T}\langle \partial_{t}c_{k}(\cdot, t), \varphi(\cdot, t)\rangle_{(H^{1})', H^{1}}\ud t \\
\dsp + \int_{0}^{T}\int_{\Omega} \mathbf{D}_{k}(x, \mathbf{u}_{k}(x, t))\nabla c_{k}(x, t) \cdot \nabla \varphi(x, t)\ud x \ud t \\
\dsp - \int_{0}^{T}\int_{\Omega} \mathbf{u}_{k}(x, t)\cdot \nabla \varphi(x, t)c_{k}(x, t) \ud x \ud t\\
\dsp + \int_{0}^{T}\int_{\Omega} c_{k}(x, t)\varphi(x, t)b(x, t)\nu(x)\ud x \ud t  \\
\dsp =\int_{0}^{T}\int_{\Omega} \hat c(x, t)\varphi(x, t)a(x, t)\nu(x) \ud x \ud t, \quad \forall \varphi \in L^{2}(0,T; H^{1}(\Omega)),
\end{array}
\label{eq:fgpara}
\end{equation}
\begin{equation}
\begin{array}{l}
\dsp \mathbf{u}_{k}(x, t) =  -\frac{\mathbf{K}(x)}{\mu(c_{k}(x, t))}(\nabla p_{k}(x, t) - \rho(c_{k}(x, t))\mathbf{g}), \\
\dsp - \int_{0}^{T}\int_{\Omega}\mathbf{u}_{k}(x, t) \cdot \nabla \psi(x, t) \ud x \ud t= \int_{0}^{T}\int_{\Omega}\psi(x, t) a(x, t)\nu(x) \ud x \ud t \\
\dsp \qquad - \int_{0}^{T}\int_{\Omega}\psi(x, t)b(x, t)\nu(x) \ud x \ud t, \quad \forall \psi \in L^{1}(0,T; H^{1}(\Omega)).
\end{array}
\label{eq:fgell}
\end{equation}

\begin{remark} \label{rem:rem2}
The assumption of constant unit porosity made in \cite{fg00} can be relaxed to \eqref{hyp:porosity} without the introduction of further mathematical difficulties. We assume this relaxation in our subsequent estimates. 
\end{remark}
In the course of proving this result, the authors show that there is a constant $\ctel{regell}$ depending only on $\lVert a \lVert_{L^{\infty}(0,T; C(\widebar \Omega))}$ and $\lVert \nu \lVert_{L^{2}(\Omega)}$ such that
\begin{equation}
\Vert p_{k}\Vert_{L^{\infty}(0,T; H^{1}(\Omega))} \leq \cter{regell}, \qquad \Vert \mathbf{u}_{k}\Vert_{L^{\infty}(0,T; L^{2}(\Omega)^{d})} \leq \cter{regell}. \label{eq:darcyest} 
\end{equation}
The next section details the additional estimates required to pass to the limit in \eqref{eq:fgconc}--\eqref{eq:darcyest} and hence prove Proposition \ref{prop:dreg}.


\subsection{Estimates}
\label{sec:regest}
We begin with some simple energy estimates, which in particular provide bounds on spatial derivatives of $c_{k}$.

\begin{proposition} \label{prop:energy}
Assume that \eqref{eq:fgconc}--\eqref{eq:fgell} hold. There exists $\ctel{energy}$ depending only on $\lVert a \lVert_{L^{\infty}(0,T; C(\widebar \Omega))}$ and $\lVert \nu \lVert_{L^{1}(\Omega)}$ such that for all $s \in [0, T]$,
\begin{multline}\label{eq:energy}
\phi_{\ast}\Vert c_{k}(s)\Vert_{L^{2}(\Omega)}^{2} + 2\int_{0}^{s}\int_{\Omega}\mathbf{D}_{k}(x, \mathbf{u}_{k}(x, t))\nabla c_{k}(x, t) \cdot \nabla c_{k}(x, t)\ud x \ud t  
\leq \cter{energy} 
\end{multline}
\end{proposition}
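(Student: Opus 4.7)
The plan is to test the parabolic equation \eqref{eq:fgpara} with $\varphi=c_k$ itself. This is admissible since $c_k\in L^2(0,T;H^1(\Omega))$, and it gives four terms whose treatment I now outline.

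First I would handle the time-derivative term by invoking the chain rule: because $\Phi$ does not depend on $t$ and $\Phi\partial_t c_k\in L^2(0,T;(H^1(\Omega))')$, one has
\begin{equation*}
\int_0^s \langle \Phi\partial_t c_k(\cdot,t), c_k(\cdot,t)\rangle_{(H^1)',H^1}\ud t
= \tfrac{1}{2}\int_\Omega \Phi(x)c_k(x,s)^2\ud x - \tfrac{1}{2}\int_\Omega \Phi(x)c_0(x)^2 \ud x,
\end{equation*}
a standard consequence of e.g.\ the Lions--Magenes lemma applied to $\Phi c_k$.  The diffusion term is left as is and will supply the coercive contribution on the left-hand side.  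For the convection term, I would use $2c_k\nabla c_k=\nabla(c_k^2)$ and integrate by parts, noting the no-flow boundary condition $\mathbf{u}_k\cdot\mathbf{n}=0$ and the divergence identity $\divergence \mathbf{u}_k=(a-b)\nu$ obtained by testing \eqref{eq:fgell} against $\psi$ in $H^1(\Omega)$; this yields
\begin{equation*}
-\int_0^s\!\!\int_\Omega \mathbf{u}_k\cdot \nabla c_k\, c_k\ud x\ud t
= \tfrac{1}{2}\int_0^s\!\!\int_\Omega (a-b)\nu\, c_k^2\ud x\ud t.
\end{equation*}

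Combining this with the sink term $\int_0^s\!\!\int_\Omega c_k^2\,b\,\nu\ud x\ud t$ yields $\tfrac{1}{2}\int_0^s\!\!\int_\Omega (a+b)\nu\, c_k^2\ud x\ud t$, which is non-negative since $a,b,\nu\geq 0$ and can be dropped.  It remains to bound the right-hand side: using $0\leq \hat c, c_k\leq 1$ and $a\geq 0$,
\begin{equation*}
\int_0^s\!\!\int_\Omega \hat c\, c_k\, a\,\nu\ud x\ud t
\leq T\,\lVert a\rVert_{L^\infty(0,T;C(\widebar\Omega))}\,\lVert \nu\rVert_{L^1(\Omega)}.
\end{equation*}
Multiplying through by $2$ and using $\Phi\geq \phi_\ast$ on the left and $\Phi\leq \phi_\ast^{-1}$ together with $c_0\leq 1$ to control the initial term on the right gives \eqref{eq:energy}, with $\cter{energy}=\phi_\ast^{-1}|\Omega|+2T\lVert a\rVert_{L^\infty(0,T;C(\widebar\Omega))}\lVert \nu\rVert_{L^1(\Omega)}$.

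The only non-routine point is the justification of the chain-rule identity for $\Phi c_k^2$, since $\Phi$ is merely $L^\infty$ and the natural duality is $(H^1)'$--$H^1$; this is the step I expect to require the most care, but it follows from the standard $t$-independent density argument (approximate $c_k$ by smooth functions, note that $\Phi$ commutes with the duality bracket because it is a fixed bounded multiplier, and pass to the limit).  Everything else is an application of positivity of $\mathbf{D}_k$ via \eqref{eq:dkassump}, the $L^\infty$ bounds from \eqref{eq:fgconc}--\eqref{hyp:injectedconc}, and the no-flow identity from the Darcy equation.
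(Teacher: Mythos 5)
Your proposal is correct and follows essentially the same route as the paper: test \eqref{eq:fgpara} with $\mathbf{1}_{[0,s)}c_k$, handle the time term by the chain rule for $\Phi c_k^2$, and rewrite the convection term by using $\mathbf{1}_{[0,s)}c_k^2/2$ as a test function in the Darcy weak formulation \eqref{eq:fgell}. The only (cosmetic) difference is that you drop the resulting non-negative term $\tfrac12\int(a+b)\nu c_k^2$ outright, whereas the paper bounds it, which is slightly less clean since it introduces an inessential $\lVert b\rVert$ in the intermediate estimate.
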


\begin{proof}
Take $\varphi = \mathbf{1}_{[0, s)}c_k$ as a test function in \eqref{eq:fgpara} to obtain
\begin{equation}
\begin{array}{l}
\dsp \int_{0}^{s}\langle \Phi\partial_{t}c_{k}(\cdot, t), c_{k}(\cdot, t)\rangle_{(H^{1})', H^{1}}\ud t \\
\dsp + \int_{0}^{s}\int_{\Omega} \mathbf{D}_{k}(x, \mathbf{u}_{k}(x, t))\nabla c_{k}(x, t) \cdot \nabla c_{k}(x, t)\ud x \ud t \\
\dsp - \int_{0}^{s}\int_{\Omega} c_{k}(x, t)\mathbf{u}_{k}(x, t)\cdot \nabla c_{k}(x, t) \ud x \ud t \\
\dsp + \int_{0}^{s}\int_{\Omega} c_{k}^{2}(x, t)b(x, t)\nu(x)\ud x \ud t \\
\dsp = \int_{0}^{s}\int_{\Omega} \hat c(x, t)c_{k}(x, t)a(x, t)\nu(x) \ud x \ud t.
\end{array} \label{eq:energy1}
\end{equation}
From \eqref{eq:fgconc}, $c_{k} \in L^{2}(0,T; H^{1}(\Omega))\cap L^{\infty}(\Omega \times (0, T))$. It follows that $\nabla \big(\tfrac{c_{k}^{2}}{2}\big) = c_{k}\nabla c_{k} \in L^{2}(0,T; L^{2}(\Omega))$ and so $\mathbf{1}_{[0,s)}\tfrac{c_{k}^{2}}{2}$ is a valid test function in \eqref{eq:fgpara}. We therefore rewrite the convection term
\begin{align*}
- \int_{0}^{s}\int_{\Omega} c_{k}(x, t)\mathbf{u}_{k}(x, t)&\lefteqn{\cdot \nabla c_{k}(x, t) \ud x \ud t }\\
&= - \int_{0}^{s} \int_{\Omega} \mathbf{u}_{k}(x, t)\cdot \nabla \Big(\frac{c_{k}^{2}(x, t)}{2}\Big)\ud x \ud t  \\
&= \int_{0}^{s} \int_{\Omega} \Big(\frac{c_{k}^{2}(x, t)}{2}\Big)(a(x, t) - b(x, t))\nu(x)\ud x \ud t.
\end{align*}
Integration by parts on the first term of \eqref{eq:energy1} yields 
\begin{equation*}
\int_{0}^{s}\langle \Phi\partial_{t}c_{k}(\cdot, t), c_{k}(\cdot, t)\rangle_{(H^{1})', H^{1}}\ud t = \frac{1}{2}\big(\lVert \sqrt{\Phi}c_{k}(\cdot, s)\lVert_{L^{2}(\Omega)}^{2} - \lVert \sqrt{\Phi}c_{0}\lVert_{L^{2}(\Omega)}^{2}\big).
\end{equation*}Substituting back in \eqref{eq:energy1},
\begin{eqnarray*}
\frac{1}{2}\big(\lVert \sqrt{\Phi}c_{k}(\cdot, s)\lVert_{L^{2}(\Omega)}^{2} \lefteqn{- \lVert \sqrt{\Phi}c_{0}\lVert_{L^{2}(\Omega)}^{2}\big)} &&\\
\qquad + \lefteqn{\int_{0}^{s}\int_{\Omega} \mathbf{D}_{k}(x, \mathbf{u}_{k}(x, t))\nabla c_{k}(x, t) \cdot \nabla c_{k}(x, t)\ud x \ud t }\\
&=& \int_{0}^{s}\int_{\Omega} \Big(\hat c(x, t)c_{k}(x,t) - \frac{c_{k}^{2}(x, t)}{2}\Big)a(x, t)\nu(x) \ud x \ud t \\
&&- \int_{0}^{s}\int_{\Omega} \frac{c_{k}^{2}(x,t)}{2}b(x,t)\nu(x)\ud x \ud t \\
&\leq& \big(\lVert a \lVert_{L^{\infty}(0,T; C(\widebar \Omega))} + \lVert b \lVert_{L^{\infty}(0,T; C(\widebar \Omega))}\big)\lVert \nu \lVert_{L^{1}(\Omega)}s ,
\end{eqnarray*}
where the last inequality follows from the uniform bound \eqref{eq:fgconc} and the hypotheses \eqref{hyp:porosity} and \eqref{hyp:initialconc} on the
porosity and the initial concentration. Estimate \eqref{eq:energy} is then a straightforward consequence.
\end{proof}

\begin{corollary} \label{cor:energy}
There exist $\ctel{dkhalf}$ and $\ctel{dk}$, both depending only on $\lVert a \lVert_{L^{\infty}(0,T; C(\widebar \Omega))}$ and $\lVert \nu \lVert_{L^{1}(\Omega)}$, such that
\begin{align}
\lVert \mathbf{D}_{k}^{1/2}(\cdot, \mathbf{u}_{k})\nabla c_{k}\lVert_{L^{2}(0,T; L^{2}(\Omega)^{d})} &\leq \cter{dkhalf} \label{eq:cor1}\\
\lVert \mathbf{D}_{k}(\cdot, \mathbf{u}_{k})\nabla c_{k}\lVert_{L^{2}(0,T; L^{4/3}(\Omega)^{d})} &\leq \cter{dk}. \label{eq:cor2}
\end{align}
\end{corollary}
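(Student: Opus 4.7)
The plan is to read off \eqref{eq:cor1} immediately from the energy estimate of Proposition \ref{prop:energy}, and then interpolate that bound against the Darcy-velocity estimate \eqref{eq:darcyest} via Hölder's inequality to obtain \eqref{eq:cor2}.

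For \eqref{eq:cor1}, since $\mathbf{D}_k(x,\zeta)$ is symmetric and, by \eqref{eq:dkassump}, non-negative, it admits a symmetric non-negative square root $\mathbf{D}_k^{1/2}$, and
\[
\mathbf{D}_k(x,\mathbf{u}_k)\nabla c_k\cdot\nabla c_k=|\mathbf{D}_k^{1/2}(x,\mathbf{u}_k)\nabla c_k|^2.
\]
Integrating over $\Omega\times(0,T)$ and invoking \eqref{eq:energy} with $s=T$ gives \eqref{eq:cor1} directly, with $\cter{dkhalf}$ depending only on $\cter{energy}$.

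For \eqref{eq:cor2} the idea is to split the product $\mathbf{D}_k\nabla c_k=\mathbf{D}_k^{1/2}\cdot(\mathbf{D}_k^{1/2}\nabla c_k)$ and apply Hölder with the dual pair $1/4+1/2=3/4$. The operator-norm bound $|\mathbf{D}_k^{1/2}(x,\mathbf{u}_k)|\le\sqrt{\Lambda_{\mathbf{D}}(1+|\mathbf{u}_k|)}$ (a consequence of the upper bound on $\mathbf{D}_k$ combined with symmetry) then yields, for a.e.\ $t$,
\[
\|\mathbf{D}_k(\cdot,\mathbf{u}_k)\nabla c_k\|_{L^{4/3}(\Omega)^d}
\le\bigl\|\sqrt{\Lambda_{\mathbf{D}}(1+|\mathbf{u}_k|)}\bigr\|_{L^{4}(\Omega)}\,\|\mathbf{D}_k^{1/2}(\cdot,\mathbf{u}_k)\nabla c_k\|_{L^{2}(\Omega)^d}.
\]
The first factor satisfies
\[
\bigl\|\sqrt{\Lambda_{\mathbf{D}}(1+|\mathbf{u}_k|)}\bigr\|_{L^{4}(\Omega)}^{2}
=\Lambda_{\mathbf{D}}\Bigl(\int_{\Omega}(1+|\mathbf{u}_k|)^{2}\ud x\Bigr)^{1/2}
\le C\bigl(1+\|\mathbf{u}_k\|_{L^{2}(\Omega)^d}\bigr),
\]
which by \eqref{eq:darcyest} is bounded in $L^{\infty}(0,T)$ uniformly in $k$. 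Squaring and integrating in time then bounds $\|\mathbf{D}_k(\cdot,\mathbf{u}_k)\nabla c_k\|_{L^2(0,T;L^{4/3}(\Omega)^d)}^{2}$ by a constant multiple of $\cter{dkhalf}^{2}$, giving \eqref{eq:cor2}.

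Neither step poses a real obstacle; the only point requiring care is choosing the Hölder exponents so that the $L^{2}$-in-space estimate on $\mathbf{u}_k$ coming from \eqref{eq:darcyest} exactly absorbs the growth factor $(1+|\mathbf{u}_k|)^{1/2}$ coming from $\mathbf{D}_k^{1/2}$, which forces the target integrability to be $L^{4/3}$ rather than anything better at this level of regularity.
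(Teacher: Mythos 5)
Your argument is correct and follows essentially the same route as the paper: \eqref{eq:cor1} is read off from Proposition \ref{prop:energy} via the identity $\mathbf{D}_{k}\nabla c_{k}\cdot\nabla c_{k}=|\mathbf{D}_{k}^{1/2}\nabla c_{k}|^{2}$, and \eqref{eq:cor2} follows from the splitting $\mathbf{D}_{k}=\mathbf{D}_{k}^{1/2}\mathbf{D}_{k}^{1/2}$, the growth bound $|\mathbf{D}_{k}^{1/2}(x,\zeta)|\le\Lambda_{\mathbf{D}}^{1/2}(1+|\zeta|)^{1/2}$, the uniform $L^{\infty}(0,T;L^{2}(\Omega)^{d})$ bound \eqref{eq:darcyest} on $\mathbf{u}_{k}$, and H\"older with exponents $\tfrac14+\tfrac12=\tfrac34$, exactly as in the paper's proof.
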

\begin{proof}
Writing $\mathbf{D}_{k}(x, \mathbf{u}_{k})\nabla c_{k} \cdot \nabla c_{k} = 
|\mathbf{D}_{k}^{1/2}(x, \mathbf{u}_{k})\nabla c_{k}|^{2}$ and invoking
Proposition \ref{prop:energy} gives \eqref{eq:cor1}. One establishes that for a.e. $x \in \Omega$, 
\begin{equation}
|\mathbf{D}_{k}^{1/2}(x, \zeta)| \leq \Lambda_{\mathbf{D}}^{1/2}(1 + |\zeta|^{1/2}) \mbox{\quad for all $\zeta \in \RR^{d}$}. \label{eq:growthdsq}
\end{equation}
Hence, Estimate \eqref{eq:darcyest} shows that $\lVert \mathbf{D}^{1/2}_k(\cdot,\mathbf{u}_k) 
\lVert_{L^{\infty}(0,T; L^{4}(\Omega))}$ is bounded uniformly with respect to $k$.
Using H\"older's inequality to write
\begin{multline*}
\lVert \mathbf{D}_{k}(\cdot, \mathbf{u}_{k})\nabla c_{k}\lVert_{L^{2}(0,T; L^{4/3}(\Omega)^{d})} 
\\
\leq \lVert \mathbf{D}_{k}^{1/2}(\cdot, \mathbf{u}_{k})\lVert_{L^{\infty}(0,T; L^{4}(\Omega)^{d \times d})} \lVert \mathbf{D}_{k}^{1/2}(\cdot, \mathbf{u}_{k})\nabla c_{k}\lVert_{L^{2}(0,T; L^{2}(\Omega)^{d})}
\end{multline*}
thus gives \eqref{eq:cor2}.
\end{proof}

The problem of interest is dependent on both space and time and so it is standard
to complete these spatial estimates with time-derivative estimates on $c_k$, in
order to obtain the strong compactness of this sequence of functions.
\begin{proposition} \label{prop:dtc}
Let \eqref{eq:fgconc}--\eqref{eq:fgell} hold. Then there exists $\ctel{dtc1}$ depending only on $\lVert a \lVert_{L^{\infty}(0,T; C(\widebar \Omega))}$ and $\lVert \nu \lVert_{L^{2}(\Omega)}$ such that 
\begin{equation}
\lVert \Phi\partial_{t}c_{k}\lVert_{L^{2}(0,T; (W^{1, 4}(\Omega))')} \leq \cter{dtc1}. \label{eq:dtcest}
\end{equation}
\end{proposition}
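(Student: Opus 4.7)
The proof will proceed by taking a generic $\varphi \in L^{2}(0,T; W^{1,4}(\Omega))$ and using it as a test function in the weak formulation \eqref{eq:fgpara}. This is legitimate because $W^{1,4}(\Omega) \hookrightarrow H^{1}(\Omega)$ on the bounded set $\Omega$. Rearranging \eqref{eq:fgpara} then expresses $\int_{0}^{T}\langle \Phi\partial_{t}c_{k},\varphi\rangle\ud t$ as the sum of four terms (diffusion, convection, sink, source), each of which I will bound by a constant multiple of $\lVert \varphi\rVert_{L^{2}(0,T; W^{1,4}(\Omega))}$. By duality this yields the announced estimate.

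The diffusion term is handled directly by Corollary \ref{cor:energy}: H\"older's inequality in $x$ and $t$ gives
\begin{equation*}
\left|\int_{0}^{T}\!\!\int_{\Omega} \mathbf{D}_{k}(x,\mathbf{u}_{k})\nabla c_{k}\cdot\nabla\varphi\,\ud x\ud t\right|
\leq \lVert \mathbf{D}_{k}(\cdot,\mathbf{u}_{k})\nabla c_{k}\rVert_{L^{2}(0,T; L^{4/3})} \lVert \nabla\varphi\rVert_{L^{2}(0,T; L^{4})} \leq \cter{dk}\lVert \varphi\rVert_{L^{2}(0,T; W^{1,4})}.
\end{equation*}
For the convection term, the bound $0\leq c_{k}\leq 1$ from \eqref{eq:fgconc} together with \eqref{eq:darcyest} and the continuous inclusion $L^{4}(\Omega)\hookrightarrow L^{2}(\Omega)$ gives, after applying H\"older in time,
\begin{equation*}
\left|\int_{0}^{T}\!\!\int_{\Omega} c_{k}\mathbf{u}_{k}\cdot\nabla\varphi\,\ud x\ud t\right|
\leq \lVert \mathbf{u}_{k}\rVert_{L^{\infty}(0,T; L^{2})} \lVert \nabla\varphi\rVert_{L^{1}(0,T; L^{2})} \leq \cter{regell} T^{1/2}|\Omega|^{1/4}\lVert \varphi\rVert_{L^{2}(0,T; W^{1,4})}.
\end{equation*}

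For the sink and source terms, the key ingredient is the Sobolev embedding $W^{1,4}(\Omega)\hookrightarrow L^{\infty}(\Omega)$, which is valid in the physical dimensions $d\leq 3$. This is precisely why the choice of $W^{1,4}$ as the test space (rather than $H^{1}$) is suitable: it accommodates both the low $L^{4/3}$ integrability of $\mathbf{D}_{k}\nabla c_{k}$ in the diffusion term and the need to pair $\varphi$ with the measure-like data $\nu$ in the well terms. Using $0\leq c_{k}\leq 1$, $\hat c\in[0,1]$, the $L^{\infty}$ bounds on $a,b$, and $\nu\in L^{2}(\Omega)\subset L^{1}(\Omega)$,
\begin{equation*}
\left|\int_{0}^{T}\!\!\int_{\Omega}\!\!(c_{k}b - \hat c\,a)\varphi\,\nu\,\ud x\ud t\right|
\leq (\lVert a\rVert_{L^{\infty}(C(\widebar\Omega))}{+}\lVert b\rVert_{L^{\infty}(C(\widebar\Omega))})\lVert \nu\rVert_{L^{1}}\!\int_{0}^{T}\!\!\lVert \varphi(t)\rVert_{L^{\infty}}\ud t \leq C T^{1/2}\lVert \varphi\rVert_{L^{2}(0,T; W^{1,4})}.
\end{equation*}

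Collecting the four bounds and taking the supremum over $\varphi$ with $\lVert \varphi\rVert_{L^{2}(0,T; W^{1,4})}\leq 1$ yields \eqref{eq:dtcest}, with a constant depending only on the quantities listed in the statement (noting that the dependence on $\lVert \nu\rVert_{L^{2}}$ enters through the Darcy estimate \eqref{eq:darcyest}, whereas the well terms themselves only require $\lVert \nu\rVert_{L^{1}}$). No step is genuinely hard here; the only subtlety is matching exponents so that the Sobolev embedding into $L^{\infty}$ is available, which dictates the choice of $W^{1,4}$ (compatible with the $L^{4/3}$ integrability from Corollary \ref{cor:energy}).
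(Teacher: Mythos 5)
Your proof is correct in the physically relevant dimensions and follows the same skeleton as the paper's: test \eqref{eq:fgpara} with $\varphi\in L^2(0,T;W^{1,4}(\Omega))$, split the right-hand side into the four terms, and bound the diffusion and convection terms exactly as you do, via \eqref{eq:cor2} and \eqref{eq:darcyest}. The one genuine divergence is the well terms. The paper does not invoke $W^{1,4}(\Omega)\hookrightarrow L^{\infty}(\Omega)$; it simply pairs $\varphi(\cdot,t)\in L^{2}(\Omega)$ (automatic since $W^{1,4}(\Omega)\subset L^{2}(\Omega)$ on a bounded domain) against $\nu\in L^{2}(\Omega)$ by Cauchy--Schwarz, producing the factor $\lVert\varphi\rVert_{L^{1}(0,T;L^{2}(\Omega))}\lVert\nu\rVert_{L^{2}(\Omega)}$. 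This is why the constant in the statement is advertised as depending on $\lVert\nu\rVert_{L^{2}(\Omega)}$ rather than $\lVert\nu\rVert_{L^{1}(\Omega)}$, and, more importantly, it is dimension-free: the standing hypothesis \eqref{hyp:domain} allows any $d\geq 1$, whereas your embedding $W^{1,4}(\Omega)\hookrightarrow L^{\infty}(\Omega)$ requires $d\leq 3$. Relatedly, your closing remark slightly misattributes the role of the exponent $4$: in this regularised section it is forced by the $L^{4/3}$ integrability of $\mathbf{D}_{k}(\cdot,\mathbf{u}_{k})\nabla c_{k}$ (itself a consequence of the $L^{\infty}(0,T;L^{2})$ bound on $\mathbf{u}_{k}$), not by any need to control $\varphi$ in $L^{\infty}$; the pairing with continuous test functions only becomes necessary in Section \ref{sec:measprob}, where $\nu$ is a genuine measure and the test space is enlarged to $W^{1,s}(\Omega)$ with $s>2d$ precisely to recover an embedding into $C(\widebar{\Omega})$. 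In short, your route buys a weaker requirement on $\nu$ (only $\nu\in L^{1}$) at the price of a dimension restriction, while the paper's exploits the $L^{2}$ regularity of $\nu$ that is the standing assumption of this section and keeps the statement valid for all $d\geq 1$.
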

\begin{proof}
Let $\varphi \in L^{2}(0,T; W^{1, 4}(\Omega))$. From \eqref{eq:fgconc} and \eqref{eq:fgpara} we have
\begin{multline*}
\Bigg|\int_{0}^{T}\langle \Phi\partial_{t}c_{k}(\cdot, t), \varphi(\cdot, t)\rangle_{(W^{1,4})', W^{1,4}}\ud t \Bigg| \\
\leq  \int_{0}^{T}\int_{\Omega} \big|\mathbf{D}_{k}(x, \mathbf{u}_{k}(x, t))\nabla c_{k}(x, t) \cdot \nabla \varphi(x, t)\big|\ud x \ud t \\
+ \int_{0}^{T}\int_{\Omega} \big|\mathbf{u}_{k}(x, t)\cdot \nabla \varphi(x, t)c_{k}(x, t)\big| \ud x \ud t\\+ \int_{0}^{T}\int_{\Omega} \big|c_{k}(x, t)\varphi(x, t)b(x, t)\nu(x)\big|\ud x \ud t \\
+ \int_{0}^{T}\int_{\Omega} \big|\hat c(x, t)\varphi(x, t)a(x, t)\nu(x) \big|\ud x \ud t \\
\leq \lVert \mathbf{D}_{k}(\cdot, \mathbf{u}_{k})\nabla c_{k}\lVert_{L^{2}(0,T; L^{4/3}(\Omega)^{d})} \lVert \nabla \varphi\lVert_{L^{2}(0,T; L^{4}(\Omega)^{d})} \\
+ \lVert \mathbf{u}_{k}\lVert_{L^{\infty}(0, T); L^{2}(\Omega)^{d})} \lVert \nabla \varphi\lVert_{L^{1}(0,T; L^{2}(\Omega)^{d})} \\
+ \lVert \varphi\lVert_{L^{1}(0,T; L^{2}(\Omega))}\lVert \nu\lVert_{L^{2}(\Omega)}\big( \lVert a\lVert_{L^{\infty}(0,T; C(\widebar \Omega)} + \lVert b\lVert_{L^{\infty}(0,T; C(\widebar \Omega)}\big).
\end{multline*}
We then use Estimates \eqref{eq:darcyest} and \eqref{eq:cor2} to obtain
\[
\left|\int_{0}^{T}\langle \Phi\partial_{t}c_{k}(\cdot, t), \varphi(\cdot, t)\rangle_{(W^{1,4})', W^{1,4}}\ud t \right| \le
\cter{dtc1}\lVert \varphi\lVert_{L^{2}(0,T; W^{1, 4}(\Omega))},
\]
which completes the proof. 
\end{proof}


\subsection{Passing to the limit} \label{sec:passlimittrunc}
In order to apply classical compactness results we introduce the Hilbert space
\[
\Phi H^{1}(\Omega) := \big\{ \Phi v \, | \, v \in H^{1}(\Omega) \big\}, 
\]
with norm
\begin{equation*}
\lVert w \lVert_{\Phi H^{1}(\Omega)} := \Big\lVert \frac{w}{\Phi} \Big\lVert_{H^{1}(\Omega)} \mbox{\quad for }w \in \Phi H^{1}(\Omega).
\end{equation*}
Observe that since the porosity is independent of time, the previous estimate \eqref{eq:dtcest} gives a bound in $L^{2}(0,T; (W^{1, 4}(\Omega))')$ of the sequence $(\partial_{t}(\Phi c_{k}))_{k \in \NN}$. Furthermore, applying \eqref{eq:dkassump} to the left-hand side of \eqref{eq:energy} shows that the sequence $(c_{k})_{k \in \NN}$ is bounded in $L^{2}(0,T; H^{1}(\Omega))$, and therefore that the sequence $(\Phi c_{k})_{k \in \NN}$ is bounded in $L^{2}(0,T; \Phi H^{1}(\Omega))$. One may verify that $\Phi H^{1}(\Omega)$ is compactly embedded in $L^{2}(\Omega)$, so that the invocation of a compactness result of Aubin \cite{au63} is valid. Combining these with the bound on the sequence $(c_{k})$ in $L^{\infty}(\Omega \times (0, T))$ from \eqref{eq:fgconc}, we have that, up to a subsequence,
\begin{align}
c_{k} &\to \widetilde{c} \mbox{\quad in $L^{\infty}(\Omega \times (0, T))$ weak-$\ast$}, \quad 0 \leq \widetilde{c} \leq 1 \mbox{ a.e. in }\Omega \times (0, T),\nonumber \\
c_{k} &\to \widetilde{c} \mbox{\quad weakly in $L^{2}(0,T; H^{1}(\Omega)$), proving \eqref{eq:regc}}, \label{eq:ckweakh1} \\
\Phi \partial_{t}c_{k} &\to \Phi \partial_{t}\widetilde{c} \mbox{\quad weakly in $L^{2}(0,T; (W^{1,4}(\Omega))')$, proving \eqref{eq:regdtc}},\nonumber \\
\Phi c_{k} &\to \Phi\widetilde{c} \mbox{\quad in $L^{2}(0,T; L^{2}(\Omega))$ and} \label{eq:ckaub1} \\
\Phi c_{k} &\to \Phi \widetilde{c} \mbox{\quad in $C([0, T]; (W^{1,4}(\Omega))'$, which proves \eqref{eq:regic})}. \nonumber 
\end{align}
From \eqref{eq:ckaub1} and the uniform positivity of $\Phi$ we deduce that, up to a subsequence, 
\begin{align*}
c_{k} &\to \widetilde{c} \mbox{\quad in $L^{2}(0,T; L^{2}(\Omega))$ and a.e. in $\Omega \times (0, T)$}. 
\end{align*}
Estimates \eqref{eq:darcyest} and the first step in the proof of \cite[Proposition 4.5]{fg00} demonstrates that there exists
$(\widetilde{p},\widetilde{\mathbf{u}})$ satisfying \eqref{eq:regpu} and \eqref{eq:regell} and that,
up to a subsequence,
\[
\nabla p_{k} \to \nabla \widetilde{p} \mbox{\quad in $L^{2}(0,T; L^{2}(\Omega)^{d})$} \mbox{\quad and} \quad
\widetilde{\mathbf{u}}_{k} \to \widetilde{\mathbf{u}} \mbox{\quad in $L^{2}(0,T; L^{2}(\Omega)^{d})$}.
\]
To prove Proposition \ref{prop:dreg}, it remains to establish \eqref{eq:dgradc}, and pass to the limit on \eqref{eq:fgpara} to demonstrate that $\widetilde{c}$ satisfies \eqref{eq:regpara}. To this end, we apply Corollary \ref{cor:h} to obtain $\mathbf{D}_{k}(\cdot, \mathbf{u}_{k}) \to \mathbf{D}(\cdot, \widetilde{\mathbf{u}})$ in $L^{2}(0,T; L^{2}(\Omega)^{d \times d})$. This, together with \eqref{eq:cor2} and \eqref{eq:ckweakh1} verifies the hypotheses of Lemma \ref{lem:ws} with $v_{k} = \nabla c_{k}$, $w_{k} = \mathbf{D}_{k}(\cdot, \mathbf{u}_{k})$, $a, r_{1}, r_{2}, s_{1}, s_{2}$ all 2 and $b=4/3$ and hence
\begin{equation*}
\mathbf{D}_{k}(\cdot, \mathbf{u}_{k})\nabla c_{k} \to \mathbf{D}(\cdot, \widetilde{\mathbf{u}})\nabla \widetilde{c} \mbox{\quad weakly in $L^{2}(0,T; L^{4/3}(\Omega)^{d})$,}
\end{equation*}
which proves \eqref{eq:dgradc}. Taking $\varphi \in L^{2}(0,T; W^{1, 4}(\Omega))$, we pass to the limit in \eqref{eq:fgpara} using weak-strong convergence. This concludes the proof of Proposition \ref{prop:dreg}.
 
\begin{remark}
Following Remark \ref{rem:rem1}, we show that $\mathbf{D}^{1/2}(\cdot, \widetilde{\mathbf{u}})\nabla \widetilde{c} \in L^{2}(0,T; L^{2}(\Omega)^{d})$ using the same technique as above. Apply Corollary \ref{cor:h} to obtain $\mathbf{D}_{k}^{1/2}(\cdot, \mathbf{u}_{k}) \to \mathbf{D}^{1/2}(\cdot, \widetilde{\mathbf{u}})$ in $L^{2}(0,T; L^{2}(\Omega)^{d \times d})$. As before, the uniform bound \eqref{eq:cor1} on the sequence $(\mathbf{D}_{k}^{1/2}(\cdot, \mathbf{u}_{k})\nabla c_{k})_{k \in \NN}$ and \eqref{eq:ckweakh1} allow us to apply Lemma \ref{lem:ws} and hence
\begin{equation}
\mathbf{D}_{k}^{1/2}(\cdot, \mathbf{u}_{k})\nabla c_{k} \to \mathbf{D}^{1/2}(\cdot, \widetilde{\mathbf{u}})\nabla \widetilde{c} \mbox{\quad weakly in $L^{2}(0,T; L^{2}(\Omega)^{d})$}. \label{eq:dsqgradcreg}
\end{equation}
We employ this result in the next section, where we establish the main result of this paper.
\end{remark}


\section{Existence of a solution with singular well data} \label{sec:measprob}

As stated in the introduction, we obtain the existence of a solution to \eqref{eq:darcy}--\eqref{eq:ic} with measure source terms by passing to the limit on the regularised problem.

\subsection{Stability result for the elliptic equation} \label{sec:stab}

The following proposition is a stronger version of \cite[Proposition 3.3]{fg00}.
In this reference, the stability result is proved under additional assumptions
on the measure $\mu$ and its approximation $(f_n)_{n\in\NN}$, stated in order to
get uniqueness of the limit via a regularity result of Meyers \cite{mey63}. We prove
here that, by invoking Stampacchia's notion of solution for linear elliptic equations
with measures \cite{dro00,pri95,stam65}, we can omit these additional
assumptions and prove nevertheless the convergence of the whole sequence of approximate solutions.

\begin{proposition}\label{prop:stab}
Let $(A_n)_{n\in\NN}\subset L^\infty(\O;M_d(\RR))$ be a sequence of bounded measurable matrix-valued functions,
$(F_n)_{n\in\NN}\subset L^2(\O)^d$, $(f_n)_{n\in\NN}\subset L^2(\O)$. We assume that,
for some $A:\O\to M_d(\RR)$, $F\in L^2(\O)^d$ and $\mu\in (C(\widebar \Omega))'$,
\begin{enumerate}
\item $A_n\to A$ pointwise a.e. on $\O$, $(A_n)_{n\in\NN}$ is bounded
in $L^\infty(\O;M_d(\RR))$ and uniformly elliptic,
\item $F_n\to F$ in $L^2(\O)^d$,
\item $f_n\to \mu$ for the weak-$*$ topology of $(C(\widebar{\O}))'$ and,
for all $n\in\NN$, $\int_\O f_n(x)\ud x=0$.
\end{enumerate}
We let $u_n$ be the weak solution to $-\div(A_n\nabla u_n-F_n)=f_n$ with homogeneous Neumann
boundary conditions. That is, 
\begin{equation}\label{pro:neu}
\left\{
\begin{array}{ll}
\dsp u_n\in H^1(\O)\mbox{ satisfies }\dsp\int_\O u_n(x)\ud x=0\mbox{ and, }
\forall v\in H^1(\O)\,,\\
\dsp\int_\O A_n(x)\nabla u_n(x)\cdot\nabla v(x)\ud x=\int_\O f_n(x)v(x)\ud x
 +\int_\O F_n(x)\cdot\nabla v(x)\ud x.
\end{array}
\right.
\end{equation}
Then there exists a weak solution $u$ to $-\div(A\nabla u-F)=\mu$ with
homogeneous Neumann boundary conditions:
\begin{equation}\label{pro:neumeas}
\left\{
\begin{array}{ll}
\dsp u\in \bigcap_{q<\frac{d}{d-1}}W^{1,q}(\O)\mbox{ satisfies }\dsp\int_\O u(x)\ud x=0\mbox{ and, }
\forall v\in \bigcup_{r>d}W^{1,r}(\O)\,,\\
\dsp\int_\O A(x)\nabla u(x)\cdot\nabla v(x)\ud x=\int_\O v(x)\ud\mu(x)
 +\int_\O F(x)\cdot\nabla v(x)\ud x,
\end{array}
\right.
\end{equation}
such that, as $n\to\infty$, $u_n\to u$ in $W^{1,q}(\O)$ for all $q<\frac{d}{d-1}$.
\end{proposition}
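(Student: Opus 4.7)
The plan is to establish uniform $W^{1,q}(\O)$ estimates on $(u_n)$ for every $q<d/(d-1)$ via a Boccardo--Gallou\"et truncation argument, extract a weak subsequential limit $u$ satisfying \eqref{pro:neumeas}, and then invoke Stampacchia's duality to identify $u$ uniquely, thereby promoting subsequential weak convergence to whole-sequence strong convergence.

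\emph{Uniform estimates and passage to the weak limit.} The weak-$*$ convergence $f_n\to\mu$ in $(C(\widebar\O))'$ gives $\sup_n\|f_n\|_{L^1(\O)}<\infty$. Testing \eqref{pro:neu} against the truncation $T_k(u_n)$ and exploiting the uniform ellipticity of $A_n$ together with Young's inequality to absorb the $F_n$-term yields
\[
\|\nabla T_k(u_n)\|_{L^2(\O)}^2\le C\bigl(k\|f_n\|_{L^1(\O)}+\|F_n\|_{L^2(\O)}^2\bigr),
\]
and the standard distribution-function analysis of Boccardo--Gallou\"et upgrades this to the a priori bound $\|u_n\|_{W^{1,q}(\O)}\le C_q$ independent of $n$, for every $q<d/(d-1)$ (the Poincar\'e--Wirtinger inequality handles the mean-zero normalisation). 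By diagonal extraction over $q_k\uparrow d/(d-1)$ there exists a subsequence (still denoted $(u_n)$) and $u\in\bigcap_{q<d/(d-1)}W^{1,q}(\O)$ with $\int_\O u\,\ud x=0$ such that $u_n\rightharpoonup u$ weakly in each such $W^{1,q}$. To pass to the limit in the weak formulation, fix $v\in W^{1,r}(\O)$ with $r>d$; Morrey's embedding gives $v\in C(\widebar\O)$, so $\int f_n v\to\int v\,\ud\mu$ by weak-$*$ convergence and $\int F_n\cdot\nabla v\to\int F\cdot\nabla v$ by strong $L^2$-convergence of $(F_n)$. Pick $q<d/(d-1)$ with $q'<r$ (feasible since $r/(r-1)<d/(d-1)$); dominated convergence applied to the uniformly bounded, a.e.-converging $A_n$ gives $A_n^T\nabla v\to A^T\nabla v$ in $L^{q'}(\O)$, which combined with the weak $L^q$-convergence of $(\nabla u_n)$ yields $\int A_n\nabla u_n\cdot\nabla v\to\int A\nabla u\cdot\nabla v$. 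Hence $u$ satisfies \eqref{pro:neumeas}.

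\emph{Stampacchia uniqueness, whole-sequence and strong convergence.} Uniqueness of the mean-zero solution of \eqref{pro:neumeas} is obtained by the duality argument of \cite{dro00,pri95,stam65}. For each $g\in L^\infty(\O)$ with $\int_\O g=0$, the transposed Neumann problem $-\div(A^T\nabla w_g)=g$, $\int w_g=0$, has an $H^1$-solution and De Giorgi--Nash gives $w_g\in C^\alpha(\widebar\O)$; however $w_g$ generally does not lie in $W^{1,r}(\O)$ for any $r>d$ when $A$ is merely $L^\infty$. One therefore approximates $A$ by smooth uniformly elliptic matrices $A^\varepsilon$, constructs the corresponding dual solutions $w_g^\varepsilon\in W^{1,\infty}(\O)$ (admissible test functions for \eqref{pro:neumeas}), integrates by parts using the smoothness of $A^\varepsilon$ and the homogeneous Neumann condition, and sends $\varepsilon\to 0$ to recover the Stampacchia identity
\[
\int_\O u\,g\,\ud x=\int_\O w_g\,\ud\mu+\int_\O F\cdot\nabla w_g\,\ud x,
\]
which determines $u$ uniquely modulo constants, fixed by $\int u=0$. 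Uniqueness implies that the whole sequence $(u_n)$ converges weakly to $u$ in every $W^{1,q}$, $q<d/(d-1)$. For the upgrade to strong convergence, set $w_n:=u_n-u$: subtracting the two equations, $w_n$ solves (in the distributional Neumann sense)
\[
-\div(A_n\nabla w_n)=(f_n-\mu)+\div(F_n-F)+\div\bigl((A_n-A)\nabla u\bigr),
\]
whose last two source terms converge to $0$ in $L^2(\O)$ and in $L^q(\O)$ for every $q<d/(d-1)$ respectively, by dominated convergence on the uniformly bounded, a.e.-converging $A_n$. Applying the truncation analysis of the first step to $w_n$ (now with data decaying in the appropriate norms) and invoking a Vitali-type argument to extract a.e.\ convergence of $\nabla w_n$ to $0$ yields strong $W^{1,q}$-convergence for every $q<d/(d-1)$.

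The principal obstacle is the rigorous implementation of the Stampacchia duality when $A$ is merely $L^\infty$: the natural dual solution $w_g$ falls short of the regularity required to test directly against \eqref{pro:neumeas}, and the smooth-coefficient approximation scheme must be carefully controlled so that the mismatch term $\int_\O (A-A^\varepsilon)\nabla u\cdot\nabla w_g^\varepsilon$ vanishes in the limit despite $\nabla u$ lying only in $L^q$ with $q<d/(d-1)\le 2$. A comparable subtlety arises in the strong-convergence step, since $f_n-\mu$ does not tend to $0$ in total variation; smallness of the equation data for $w_n$ must therefore be recovered from weak-$*$ decay via truncation and the Stampacchia formalism applied to the linear equation for $w_n$.
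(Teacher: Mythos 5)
Your first step (uniform $L^1$ bound on $(f_n)$ from weak-$*$ convergence, Boccardo--Gallou\"et truncation estimates, extraction of a subsequential weak limit $u$ satisfying \eqref{pro:neumeas}) is sound and would indeed yield \emph{a} solution of \eqref{pro:neumeas} along a subsequence. The genuine gap is in the identification step. You propose to prove the Stampacchia duality identity $\int_\O u\,g\,\ud x=\int_\O w_g\,\ud\mu+\int_\O F\cdot\nabla w_g\,\ud x$ \emph{starting from the limit formulation} \eqref{pro:neumeas}, via smooth approximation $A^\varepsilon$ of $A$. This cannot work, and not merely for the technical reason you flag. By Serrin-type counterexamples, for merely bounded measurable elliptic $A$ the homogeneous problem $-\div(A\nabla z)=0$ admits nonzero solutions in $\bigcap_{q<d/(d-1)}W^{1,q}(\O)$ with zero mean; hence \eqref{pro:neumeas} does \emph{not} determine $u$ uniquely, and no argument whose only input is \eqref{pro:neumeas} can output the duality identity (a pathological solution satisfies the former but not the latter). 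Concretely, your scheme founders exactly where you say it does: testing \eqref{pro:neumeas} with $w_g^\varepsilon$ produces the mismatch term $\int_\O(A-A^\varepsilon)\nabla u\cdot\nabla w_g^\varepsilon\,\ud x$, which requires pairing $\nabla u\in L^q$, $q<\frac{d}{d-1}$, against $\nabla w_g^\varepsilon$ in $L^{q'}$ with $q'>d$ uniformly in $\varepsilon$ --- precisely the regularity that fails for rough limiting coefficients (Meyers only gives $L^{2+\epsilon_0}$). The correct route, and the one the paper takes by invoking the stability theory of Stampacchia/Droniou, is to establish the duality identity \emph{for the approximate problems} --- $\int_\O u_n g\,\ud x=\int_\O f_n w_{g,n}\,\ud x+\int_\O F_n\cdot\nabla w_{g,n}\,\ud x$ with $w_{g,n}$ solving the transposed problem with coefficient $A_n^T$, which is legitimate since everything is $H^1$ --- and then pass to the limit using the uniform De Giorgi--Nash estimates ($w_{g,n}$ bounded in $C^{0,\alpha}(\widebar\O)$, converging in $C(\widebar\O)$ and $H^1(\O)$), so that $\int_\O f_n w_{g,n}\,\ud x\to\int_\O w_g\,\ud\mu$ pairs weak-$*$ convergence with uniform convergence. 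Every subsequential limit then satisfies the duality identity, which \emph{is} uniquely solvable, and whole-sequence convergence follows.

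The strong-convergence upgrade is likewise not established by your sketch: writing the equation for $w_n=u_n-u$ and appealing to ``weak-$*$ decay via truncation'' does not produce strong convergence of $\nabla T_k(w_n)$, since $f_n-\mu\to0$ only weakly-$*$ and the truncation estimate is not continuous with respect to that topology. The standard argument (as in the cited stability results) proves strong $L^2$ convergence of $\nabla T_k(u_n)$ by an energy comparison that again uses the approximate equations and the duality framework, then interpolates with the uniform $W^{1,q}$ bounds on the level sets $\{|u_n|>k\}$. In short: your estimates and weak passage to the limit reproduce the easy half of the proposition, but the uniqueness-of-limit and strong-convergence claims --- which are the substance of the statement, as the paper's remark emphasises --- are not reachable from the limit equation \eqref{pro:neumeas} alone and require the duality formulation to be carried along the approximation, not reconstructed afterwards.
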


\begin{remark}
Note that the \emph{whole} sequence $u_n$ converges strongly to a solution of
\eqref{pro:neumeas}. The proof also shows that $(u_n)_{n\in\NN}$ actually converges to a solution
of $-\div(A\nabla u-F)=\mu$ in a stronger sense than \eqref{pro:neumeas},
namely Stampacchia's duality formulation, which ensures the uniqueness of the solution
to this problem (this is actually the reason why the stability result does not hold ``up to
a subsequence'').
\end{remark}

\begin{proof}
Let us first recall Stampacchia's setting for solving elliptic equations with measure
data. For a given bounded and coercive matrix-valued function $A$,
we define $T_A:(H^1(\O))'\to H^1(\O)$ as the inverse of $-\div(A^T\nabla )$ with
homogeneous Neuman boundary conditions, i.e. for $g\in (H^1(\O))'$, $w=T_A(g)$ is
the solution to
\begin{equation}\label{pro:neubase}
\left\{
\begin{array}{ll}
\dsp w\in H^1(\O)\,,\quad\int_\O w(x)\ud x=0\mbox{ and }\\
\dsp \forall v\in H^1(\O)\,,\;
\int_\O A(x)^T\nabla w(x)\cdot\nabla v(x)\ud x=\langle g,v\rangle_{(H^1)',H^1}.
\end{array}
\right.
\end{equation}
It has been proved (see e.g. \cite{stam65} for Dirichlet boundary conditions,
\cite{dro00} for other boundary conditions) that for any $p>d$,
$T^p_A=(T_A)_{|(W^{1,p'}(\O))'}$ takes values in $H^1(\O)\cap C(\widebar{\O})$.
Hence $(T^p_A)^*:\cM(\widebar{\O})+(H^1(\O))'\to W^{1,p'}(\O)$
is well defined. Stampacchia then defines, for $h\in 
\cM(\widebar{\O})+(H^1(\O))'$, the solution to
\begin{equation}\label{pro:neubasemeas}
\left\{
\begin{array}{ll}
-\div(A\nabla z)=h&\mbox{ in $\O$}\,,\\
A\nabla z\cdot\mathbf{n}=0&\mbox{ on $\partial\O$}
\end{array}
\right.
\end{equation}
as $z=(T^p_A)^*(h)$. It can easily be seen that $z$ does not depend on
$p$ (because $T^p_A=T^s_A$ on $(W^{1,\max(p',s')}(\O))'$),
and we therefore denote $z=(T_A)^*(h)$ without the $p$.

Several equivalent formulations exist that make clear why this $z$ can be considered
a re-formulation of \eqref{pro:neubasemeas}, see \cite{dro00,pri95,stam65}.
It can in particular be seen that if $h\in (H^1(\O))'$, then
$z=(T_A)^*(h)$ is the classical weak solution
of \eqref{pro:neubasemeas} with mean value zero.

In particular, defining $h_n\in (H^1(\O))'$ by $\langle h_n,\varphi\rangle_{(H^1)',H^1}=
\int_\O f_n(x)\varphi(x)\,dx + \int_\O F_n(x)\cdot \nabla\varphi(x)\,dx$,
we see that $u_n=(T_{A_n})^*(h_n)$. Notice that $h_n$ is the sum of two terms,
one converging to $\mu$ in $(C(\widebar{\O}))'$ weak-$*$ and the other
strongly converging to $\varphi\mapsto \int_\O F(x)\cdot\nabla \varphi(x)\,dx$
in $(H^1(\O))'$. We can therefore use the stability results 
of \cite{dro00,drophd} (established for Dirichlet, mixed or Robin boundary
conditions, but the proof is identical for Neumann boundary conditions)
which prove that, under the assumptions in the proposition,
$u_n\to u:=(T_A)^*(h)$ strongly in $W^{1,q}(\O)$ for all $q<\frac{d}{d-1}$,
where $h\in \cM(\widebar{\O})+(H^1(\O))'$ is defined
by 
\[
\forall \varphi \in C(\widebar{\O})\cap H^1(\O)\,,\quad
\langle h,\varphi \rangle_{\cM+(H^1)',C\cap H^1} = \int_\O \varphi(x)\ud\mu(x)+\int_\O F(x)\cdot\nabla \varphi(x)\ud x.
\]
The conclusion then follows immediately, as it is known \cite{dro00,pri95,stam65} that
$u=(T_A)^*(h)$ satisfies \eqref{pro:neumeas}. \end{proof}


\subsection{Approximation and estimates}

The first two steps of the proof of \cite[Theorem 2.1]{fg00} detail the approximations necessary to 
regularise the singular problem while retaining the no-flow compatibility conditions:
a sequence $(a_n,\nu_n)_{n\in\NN}$ is constructed such that
\bhyp{hyp:nunl1}
\nu_{n} \geq 0\,,\quad
\lVert \nu_{n} \lVert_{L^{1}(\Omega)} \leq \nu(\Omega),
\ehyp
\bhyp{hyp:anconverges}
a_{n}(x, t) \to a(x, t), \mbox{ as }n \to \infty, \mbox{ for a.e. }(x, t) \in \Omega \times (0, T),
\ehyp
\bhyp{hyp:anbound}
a_{n} \geq 0\mbox{ and }
(a_{n}) \mbox{ is bounded in }L^{\infty}(0,T; C(\widebar\Omega)), \mbox{ and }
\ehyp
\begin{equation}
\int_{\Omega}a_{n}(x, t)\nu_{n}(x)\ud x = \int_{\Omega}b(x, t)\nu_{n}(x)\ud x, \mbox{\quad for a.e. $t \in (0, T)$.} \label{hyp:ancompatible}
\end{equation}
This last property ensures that the compatibility condition \eqref{hyp:compatibility} still holds when $\nu$ is replaced by $\nu_{n}$ in \eqref{eq:darcy}--\eqref{eq:bc2}. We refer the reader to \cite{fg00} for further details.
\begin{remark}
The construction of $(\nu_{n})_{n \in \NN}$ necessary for the present work is simpler than in \cite{fg00}, since we do not require the sequence to be bounded in $(W^{1,q}(\Omega))'$ for all $q > 2$ (see Section \ref{sec:stab}).
\end{remark}
Replacing $a$ by $a_{n}$ and $\nu$ by $\nu_{n}$ in \eqref{eq:darcy}--\eqref{eq:ic}, Proposition \ref{prop:dreg} shows that there is a solution $(\widetilde{p}_{n}, \widetilde{\mathbf{u}}_{n}, \widetilde{c}_{n})$ to \eqref{eq:darcy}--\eqref{eq:ic} in the sense of \eqref{eq:regc}--\eqref{eq:regell}. Furthermore, \cite[Proposition 4.3]{fg00} shows that for any $1 \leq q < d/(d-1)$, there exists $\ctel{measell}$ not depending upon $n$ such that 
\begin{equation}
\Vert \widetilde{p}_{n}\Vert_{L^{\infty}(0,T; W^{1,q}(\Omega))} \leq \cter{measell}, \qquad \Vert \widetilde{\mathbf{u}}_{n}\Vert_{L^{\infty}(0,T; L^{q}(\Omega)^{d})} \leq \cter{measell}. \label{eq:measdarcyest} 
\end{equation}
It remains to re-examine the estimates of Section \ref{sec:regest} in light of \eqref{eq:measdarcyest}.

From Estimate \eqref{eq:energy} and Properties \eqref{hyp:nunl1}, \eqref{hyp:anbound}, one deduces that there exists $\ctel{measenergy}$ not depending
on $n$ such that
\begin{equation}
\lVert \widetilde{c}_{n}\lVert_{L^{2}(0,T; H^{1}(\Omega))} \leq \cter{measenergy}. \label{eq:measenergy}
\end{equation}
The elliptic estimates \eqref{eq:measdarcyest} yield an analogous result to Corollary \ref{cor:energy}.
\begin{corollary} \label{cor:measenergy}
Let \eqref{eq:regc}--\eqref{eq:regell} hold. Then 
\begin{itemize}
\item There exists $\ctel{meascor1}$, not depending on $n$, such that
\begin{align}
\lVert \mathbf{D}^{1/2}(\cdot, \widetilde{\mathbf{u}}_{n})\nabla \widetilde{c}_{n}\lVert_{L^{2}(0,T; L^{2}(\Omega)^{d})} &\leq \cter{meascor1} \label{eq:meascor1}.
\end{align}
\item For any $1 \leq r < 2d/(2d-1)$ there exists $\ctel{meascor2}$, not depending upon $n$, such that
\begin{align} 
\lVert \mathbf{D}(\cdot, \widetilde{\mathbf{u}}_{n})\nabla \widetilde{c}_{n} \lVert_{L^{2}(0,T; L^{r}(\Omega)^{d})} &\leq \cter{meascor2} \label{eq:meascor2}.
\end{align}
\end{itemize}
\end{corollary}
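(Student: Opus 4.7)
The plan is to mirror the proof of Corollary \ref{cor:energy}, with two adjustments: replace the truncated tensor $\mathbf{D}_k$ and the $L^2$ velocity estimate \eqref{eq:darcyest} by the untruncated $\mathbf{D}$ (which now appears directly in \eqref{eq:regpara}) and by the improved elliptic bound \eqref{eq:measdarcyest}. The loss of velocity integrability from $L^2$ (regularised problem) to $L^q$, $q<d/(d-1)$ (measure problem), accounts exactly for the weakening of the target exponent from $r=4/3$ in Corollary \ref{cor:energy} to any $r<2d/(2d-1)$ here.

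For \eqref{eq:meascor1} I would test \eqref{eq:regpara} with $\varphi=\mathbf{1}_{[0,s)}\widetilde{c}_n$ and reproduce the argument of Proposition \ref{prop:energy} verbatim: the elliptic equation \eqref{eq:regell} recasts the convection term, integration by parts in time produces $\tfrac{1}{2}\|\sqrt{\Phi}\widetilde{c}_n(\cdot,s)\|_{L^2}^2$, and the right-hand side is controlled using $0\leq \widetilde{c}_n\leq 1$, the uniform bound on $a_n,b$ from \eqref{hyp:anbound}, and $\|\nu_n\|_{L^1(\Omega)}\leq \nu(\Omega)$ from \eqref{hyp:nunl1}. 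The identity $|\mathbf{D}^{1/2}(\cdot,\widetilde{\mathbf{u}}_n)\nabla \widetilde{c}_n|^{2}=\mathbf{D}(\cdot,\widetilde{\mathbf{u}}_n)\nabla \widetilde{c}_n\cdot\nabla \widetilde{c}_n$ then yields \eqref{eq:meascor1}.

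For \eqref{eq:meascor2}, given $r<2d/(2d-1)$, I would select $q\in[1,d/(d-1))$ with $1/r=1/(2q)+1/2$, i.e.\ $r=2q/(q+1)$; this is possible since $q\mapsto 2q/(q+1)$ is strictly increasing on $[1,\infty)$ and tends to $2d/(2d-1)$ as $q\to d/(d-1)$. Combining the pointwise bound $|\mathbf{D}^{1/2}(x,\zeta)|\leq \Lambda_{\mathbf{D}}^{1/2}(1+|\zeta|^{1/2})$ recalled in \eqref{eq:growthdsq} with the elliptic estimate \eqref{eq:measdarcyest} shows that $\mathbf{D}^{1/2}(\cdot,\widetilde{\mathbf{u}}_n)$ is bounded, uniformly in $n$, in $L^{\infty}(0,T;L^{2q}(\Omega)^{d\times d})$. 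H\"older's inequality in space then gives
\[
\|\mathbf{D}(\cdot,\widetilde{\mathbf{u}}_n)\nabla \widetilde{c}_n\|_{L^2(0,T;L^r(\Omega)^d)}
\leq
\|\mathbf{D}^{1/2}(\cdot,\widetilde{\mathbf{u}}_n)\|_{L^\infty(0,T;L^{2q}(\Omega)^{d\times d})}\,
\|\mathbf{D}^{1/2}(\cdot,\widetilde{\mathbf{u}}_n)\nabla \widetilde{c}_n\|_{L^2(0,T;L^2(\Omega)^d)},
\]
and \eqref{eq:meascor1} closes the estimate.

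I do not foresee a substantive obstacle: this is a careful replay of Corollary \ref{cor:energy} with sharpened input data, the only genuine bookkeeping being the verification that the map $q\mapsto 2q/(q+1)$ sweeps out the full range $[1,2d/(2d-1))$ as $q$ ranges over $[1,d/(d-1))$.
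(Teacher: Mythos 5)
Your treatment of the second bullet point \eqref{eq:meascor2} is correct and is essentially the paper's own argument: bound $\mathbf{D}^{1/2}(\cdot,\widetilde{\mathbf{u}}_n)$ in $L^{\infty}(0,T;L^{2q}(\Omega)^{d\times d})$ via \eqref{eq:growthdsq} and \eqref{eq:measdarcyest}, apply H\"older with $\frac1r=\frac1{2q}+\frac12$, and observe that $r$ sweeps $[1,\tfrac{2d}{2d-1})$ as $q$ sweeps $[1,\tfrac{d}{d-1})$.

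There is, however, a genuine gap in your proof of \eqref{eq:meascor1}. You propose to test \eqref{eq:regpara} with $\varphi=\mathbf{1}_{[0,s)}\widetilde{c}_n$, but the test space in \eqref{eq:regpara} is $L^{2}(0,T;W^{1,4}(\Omega))$, whereas $\widetilde{c}_n$ is only known to lie in $L^{2}(0,T;H^{1}(\Omega))\cap L^{\infty}$; so $\mathbf{1}_{[0,s)}\widetilde{c}_n$ is not an admissible test function, and the diffusion term $\int\!\!\int \mathbf{D}(\cdot,\widetilde{\mathbf{u}}_n)\nabla\widetilde{c}_n\cdot\nabla\widetilde{c}_n$ is not even a priori integrable from \eqref{eq:dgradc} alone (since $\mathbf{D}(\cdot,\widetilde{\mathbf{u}}_n)\nabla\widetilde{c}_n\in L^2(0,T;L^{4/3})$ pairs with $\nabla\varphi\in L^2(0,T;L^4)$, not with $\nabla\widetilde{c}_n\in L^2(0,T;L^2)$). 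The energy estimate can only be performed at the level of the truncated problem, where the test space is $H^1(\Omega)$ and matches the regularity of $c_k$; this is why the paper instead obtains \eqref{eq:meascor1} by combining the uniform-in-$k$ bound \eqref{eq:cor1} (whose constant depends only on $\lVert a_n\lVert_{L^{\infty}(0,T;C(\widebar\Omega))}$ and $\lVert\nu_n\lVert_{L^1(\Omega)}$, themselves uniformly controlled by \eqref{hyp:anbound} and \eqref{hyp:nunl1}) with the weak convergence \eqref{eq:dsqgradcreg} of $\mathbf{D}_k^{1/2}(\cdot,\mathbf{u}_k)\nabla c_k$ in $L^2(0,T;L^2(\Omega)^d)$ as $k\to\infty$ and the weak lower semicontinuity of the norm. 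Your ``verbatim replay'' would need precisely this detour through the $\mathbf{D}_k$-approximation to be justified.
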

\begin{proof}
Estimate \eqref{eq:meascor1} follows from \eqref{eq:cor1}, \eqref{eq:dsqgradcreg}, \eqref{hyp:nunl1}
and \eqref{hyp:anbound}.

To prove \eqref{eq:meascor2}, note that for any $1 \leq q < d/(d-1)$, Estimate \eqref{eq:measdarcyest} gives $\ctel{meascor4}$ not depending upon $n$ such that
\begin{equation*}
\Vert |\widetilde{\mathbf{u}}_{n}|^{1/2} \Vert_{L^{\infty}(0,T; L^{2q}(\Omega))} \leq \cter{meascor4}.
\end{equation*}
It follows from the growth estimate \eqref{eq:growthdsq} that $\lVert \mathbf{D}^{1/2}(\cdot, \widetilde{\mathbf{u}}_{n})\lVert_{L^{\infty}(0,T; L^{2q}(\Omega)^{d \times d})}$ is bounded uniformly with respect to $n$. The H\"older inequality then gives
\begin{multline*}
\lVert \mathbf{D}(\cdot, \widetilde{\mathbf{u}}_{n})\nabla \widetilde{c}_{n}\lVert_{L^{2}(0,T; L^{r}(\Omega)^{d})} \\
\leq \lVert \mathbf{D}^{1/2}(\cdot, \widetilde{\mathbf{u}}_{n})\lVert_{L^{\infty}(0,T; L^{2q}(\Omega)^{d \times d})} \lVert \mathbf{D}^{1/2}(\cdot, \widetilde{\mathbf{u}}_{n})\nabla \widetilde{c}_{n}\lVert_{L^{2}(0,T; L^{2}(\Omega)^{d})},
\end{multline*}
where $r$ is chosen so that $\frac{1}{r} = \frac{1}{2q} + \frac{1}{2}$. Since $q$ can be any
number in $\big[1, \frac{d}{d-1}\big)$, $r$ can be arbitrarily chosen in $\big[ 1, \frac{2d}{2d-1}\big)$ and the proof is complete.
\end{proof}
Finally, analogous to Proposition \ref{prop:dtc}, the following estimate is necessary to utilise compactness results detailed in the next section. 

\begin{proposition}
Let \eqref{eq:regc}--\eqref{eq:regell} hold. Then for any $s > 2d$ there exists $\ctel{dtcmeas}$ not depending upon $n$ such that
\begin{equation}
\lVert \Phi \partial_{t}\widetilde{c}_{n}\lVert_{L^{2}(0,T; (W^{1, s}(\Omega))')} \leq \cter{dtcmeas}. \label{eq:measdtcest}
\end{equation}
\end{proposition}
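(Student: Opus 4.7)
The plan is to mirror the strategy of Proposition \ref{prop:dtc}: fix $s>2d$, take an arbitrary test function $\varphi\in L^2(0,T;W^{1,s}(\Omega))$, apply the weak formulation \eqref{eq:regpara} (with $a_n,\nu_n$ in place of $a,\nu$) to write $\int_0^T\langle\Phi\partial_t\widetilde c_n,\varphi\rangle\ud t$ as a sum of four integrals, and bound each by $\cter{dtcmeas}\lVert\varphi\lVert_{L^2(0,T;W^{1,s}(\Omega))}$ uniformly in $n$. Note that since $s>2d\geq 4$, $W^{1,s}(\Omega)\hookrightarrow W^{1,4}(\Omega)$, so such $\varphi$ are admissible in \eqref{eq:regpara}.

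For a single calibrated choice, I take $r=q=s'=\frac{s}{s-1}$. The condition $s>2d$ gives $r<\frac{2d}{2d-1}$, and since $\frac{2d}{2d-1}\leq \frac{d}{d-1}$ for $d\geq 1$, one also has $q<\frac{d}{d-1}$. Consequently, Corollary \ref{cor:measenergy} supplies a uniform bound on $\mathbf{D}(\cdot,\widetilde{\mathbf{u}}_n)\nabla\widetilde c_n$ in $L^2(0,T;L^r(\Omega)^d)$, and \eqref{eq:measdarcyest} gives a uniform bound on $\widetilde{\mathbf{u}}_n$ in $L^\infty(0,T;L^q(\Omega)^d)$. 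Since $r'=q'=s$, H\"older's inequality pairs the diffusion-dispersion term against $\lVert\nabla\varphi\lVert_{L^2(0,T;L^s(\Omega)^d)}$ and, using additionally the uniform $L^\infty$ bound on $\widetilde c_n$ from \eqref{eq:regc}, the convection term against $\lVert\nabla\varphi\lVert_{L^1(0,T;L^s(\Omega)^d)}\leq T^{1/2}\lVert\nabla\varphi\lVert_{L^2(0,T;L^s(\Omega)^d)}$.

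For the two well terms, I would use the Sobolev embedding $W^{1,s}(\Omega)\hookrightarrow C(\widebar\Omega)$ (valid because $s>2d\geq d$) to bound $\lVert\varphi(\cdot,t)\lVert_{L^\infty(\Omega)}$ by $C\lVert\varphi(\cdot,t)\lVert_{W^{1,s}(\Omega)}$, then combine with the $L^1(\Omega)$ bound on $\nu_n$ from \eqref{hyp:nunl1}, the uniform $L^\infty(0,T;C(\widebar\Omega))$ bounds on $a_n$ and $b$ from \eqref{hyp:anbound} and \eqref{hyp:ab}, and the $L^\infty$ bounds on $\widetilde c_n$ and $\hat c$. This controls both the sink and source integrals by $CT^{1/2}\lVert\varphi\lVert_{L^2(0,T;W^{1,s}(\Omega))}$.

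Summing the four bounds and taking the supremum over unit $\varphi$ yields \eqref{eq:measdtcest}. I do not expect a real obstacle: the argument is exactly that of Proposition \ref{prop:dtc} with the exponent $4$ replaced by $s>2d$, and the loss in the regularity of the source data (now $\nu_n$ is only controlled in $L^1(\Omega)$ rather than $L^2(\Omega)$) is precisely compensated by the stronger Sobolev embedding into $C(\widebar\Omega)$ available for $s>d$. The only point needing a moment's care is verifying that a single choice $r=q=s'$ simultaneously satisfies the admissibility thresholds of Corollary \ref{cor:measenergy} and of \eqref{eq:measdarcyest}, which is what forces the calibration $s>2d$.
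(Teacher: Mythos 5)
Your proof is correct and follows essentially the same route as the paper's: the same choice $r=s'$ (so that $r<\frac{2d}{2d-1}<\frac{d}{d-1}$ makes Corollary \ref{cor:measenergy} and \eqref{eq:measdarcyest} simultaneously applicable), the same H\"older pairings, and the same use of the embedding $W^{1,s}(\Omega)\hookrightarrow L^\infty(\Omega)$ together with the $L^1$ bound on $\nu_n$ for the well terms. No further comment is needed.
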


\begin{proof}
Let $s>2d$ and $\varphi \in L^{2}(0,T; W^{1, s}(\Omega))$. From \eqref{eq:regpara},
\begin{multline*}
\Bigg|\int_{0}^{T}\langle \Phi \partial_{t}\tilde{c}_{n}(\cdot, t), \varphi(\cdot, t)\rangle_{(W^{1,s})', W^{1,s}}\ud t \Bigg|  \\
\leq \lVert \mathbf{D}(\cdot, \widetilde{\mathbf{u}}_{n})\nabla \widetilde{c}_{n}\lVert_{L^{2}(0,T; L^{r}(\Omega)^{d})}\lVert \nabla \varphi\lVert_{L^{2}(0,T; L^{r'}(\Omega)^{d})}  \\ 
+ \lVert \widetilde{\mathbf{u}}_{n}\lVert_{L^{\infty}(0, T); L^{r}(\Omega)^{d})}\lVert \nabla \varphi\lVert_{L^{1}(0,T; L^{r'}(\Omega)^{d})}  \\ 
+ \lVert \varphi\lVert_{L^{1}(0,T; L^{\infty}(\Omega))}\lVert \nu_{n}\lVert_{L^{1}(\Omega)}\big(\lVert a_{n}\lVert_{L^{\infty}(0,T; C(\widebar \Omega))} + \, \lVert b\lVert_{L^{\infty}(0,T; C(\widebar \Omega))}\big),
\end{multline*}
where $r$ is chosen such that $r'=s$, which imposes $r=s'<\frac{2d}{2d-1}<\frac{d}{d-1}$.
Using \eqref{hyp:nunl1}, \eqref{hyp:anbound}, \eqref{eq:measdarcyest}, \eqref{eq:meascor2} and
the embedding of $W^{1,s}(\O)$ into $L^\infty(\O)$, we infer that
\[
\left|\int_{0}^{T}\langle \Phi \partial_{t}\tilde{c}_{n}(\cdot, t), \varphi(\cdot, t)\rangle_{(W^{1,s})', W^{1,s}}\ud t \right|
\le \cter{cst:esttime} \lVert \varphi\lVert_{L^{2}(0,T; W^{1, s}(\Omega))}, 
\]
where $\ctel{cst:esttime}$ does not depend on $n$, hence the result.
\end{proof}


\subsection{Passing to the limit}

Analogously to Section \ref{sec:passlimittrunc}, from \eqref{eq:regc}, \eqref{eq:measenergy}, \eqref{eq:measdtcest} and once again using the Aubin compactness theorem, one may assume that, up to a subsequence, 
\begin{align}
\widetilde{c}_{n} &\to c \mbox{\quad weakly in $L^{2}(0,T; H^{1}(\Omega)$}), \label{eq:cntocweakh1} \\
\widetilde{c}_{n} &\to c \mbox{\quad in $L^{\infty}(\Omega \times (0, T))$ weak-$\ast$, \quad $0 \leq c \leq 1$ a.e. in $\Omega \times (0, T)$,}\label{eq:cntoclinfty} \\
\Phi \partial_{t}\widetilde{c}_{n} &\to \Phi\partial_{t}c \mbox{\quad weakly in $L^{2}(0,T; (W^{1,s}(\Omega))') \quad \forall s>2d$,}\label{eq:dtcntoc} \\
\Phi \widetilde{c}_{n} &\to \Phi c \mbox{\quad in $L^{2}(0,T; L^{2}(\Omega))$, and} \label{eq:cnaub1} \\
\Phi \widetilde{c}_{n} &\to \Phi c \mbox{\quad in $C([0, T]; (W^{1,s}(\Omega))') \quad \forall s > 2d$.} \label{eq:cnaub2}
\end{align}
From \eqref{eq:dtcntoc} we deduce \eqref{eq:measdtc}. Properties \eqref{eq:cnaub2} and
\eqref{eq:regic} give \eqref{eq:measic}. As before, from \eqref{eq:cnaub1} we infer that,
up to another subsequence,
\begin{equation}
\widetilde{c}_{n} \to c \mbox{\quad in $L^{2}(0,T; L^{2}(\Omega))$ and a.e. on $\Omega \times (0,T)$.}
\label{eq:cnae} 
\end{equation}
For the elliptic equation, from Estimates \eqref{eq:measdarcyest} we have, up to a subsequence,
\begin{align}
\widetilde{p}_{n} &\to p \mbox{\quad in $L^{\infty}(0,T; W^{1, q}(\Omega))$ weak-$\ast$ \quad $\forall 1 \leq q < d/(d-1)$, and} \label{eq:pntop} \\
\widetilde{\mathbf{u}}_{n} &\to \mathbf{u} \mbox{\quad in $L^{\infty}(0,T; L^{q}(\Omega)^{d})$ weak-$\ast$ \quad $\forall 1 \leq q < d/(d-1)$.} \label{eq:untou}
\end{align}
Taking $\psi \in L^{1}(0,T; W^{1, q}(\Omega))$ for some $q > d$, it is straightforward to pass to the limit on \eqref{eq:regell} using \eqref{eq:cnae}, \eqref{eq:pntop} and \eqref{eq:untou}. For the right-hand side of \eqref{eq:regell}, note that for $q > d$, $W^{1, q}(\Omega) \subset C(\widebar \Omega)$ and $(a_{n}(\cdot, t) - b(\cdot, t))\nu_{n} \to (a(\cdot, t) - b(\cdot, t))\nu$ in $(C(\widebar \Omega))'$ weak-$\ast$ (see below) and use the dominated convergence theorem in time. This proves \eqref{eq:measpu} and \eqref{eq:measell}.


We now invoke the stability result Proposition \ref{prop:stab} to obtain stronger convergence of the pressure and Darcy velocity, which is necessary to pass to the limit on the parabolic equation \eqref{eq:regpara}.

Let $t\in (0,T)$ such that $\tilde{c}_n(\cdot,t)\to c(\cdot,t)$ a.e. on $\O$ (almost
every $t$ satisfies this by \eqref{eq:cnae}). We set
\begin{align*}
A_{n}(x) &:= \frac{\mathbf{K}(x)}{\mu(\widetilde{c}_{n}(x,t))}, \quad F_{n}(x):= \rho(\widetilde{c}_{n}(x,t))\mathbf{g}, \\
A(x) &:= \frac{\mathbf{K}(x)}{\mu(c(x,t))}, \quad F(x):= \rho(c(x,t))\mathbf{g}.
\end{align*} 
Then from \eqref{hyp:K} and \eqref{hyp:densvisc}
it is clear that $A_n$ is uniformly bounded and elliptic.
By \eqref{eq:cnae} and the continuity of $\mu$ and $\rho$ on the unit interval, $A_{n}(\cdot) \to A(\cdot)$ a.e. on $\Omega$ and $F_{n}(\cdot) \to F(\cdot)$ in $L^{2}(\Omega)$ as $n \to \infty$,
so that the first two hypotheses of Proposition \ref{prop:stab} are satisfied. One also requires that
\begin{align*}
(a_{n}(\cdot, t) - b(\cdot, t))\nu_{n} \to (a(\cdot, t) - b(\cdot, t))\nu \mbox{ in $(C(\widebar \Omega))'$ weak-$\ast$,}
\end{align*}
which is established in \cite{fg00}. Then Proposition \ref{prop:stab} shows that $\widetilde{p}_n(\cdot,t)
\to p(\cdot,t)$ in $W^{1,q}(\Omega)$ for all $q<\frac{d}{d-1}$.
In particular, for a.e. $t \in (0, T)$, 
\begin{align}
\nabla \widetilde{p}_{n}(\cdot, t) \to \nabla p(\cdot, t)\mbox{ and } \widetilde{\mathbf{u}}_{n}(\cdot, t) \to \mathbf{u}(\cdot, t) \mbox{ in $L^{q}(\Omega)^{d}$ for all $1\leq q < \displaystyle \frac{d}{d-1}$,} \label{eq:strongpu}
\end{align}
without the extraction of subsequences after \eqref{eq:pntop}, \eqref{eq:untou}. The dominated convergence theorem therefore gives, using \eqref{eq:measdarcyest}, 
\begin{equation}
\displaystyle \widetilde{\mathbf{u}}_{n} \to \mathbf{u}
\mbox{\quad in $L^{p}(0,T; L^{q}(\Omega)^{d})$}\,,\;
 \forall p < \infty\,,\;\forall q < \frac{d}{d-1}. \label{eq:untoustrong}
\end{equation}
This strong convergence of the Darcy velocity is critical to addressing in particular the convergence of the diffusion-dispersion term. To this end, Corollary \ref{cor:h} with $H_{n} = \mathbf{D}^{1/2}$ gives
\begin{equation}
\mathbf{D}^{1/2}(\cdot, \widetilde{\mathbf{u}}_{n}) \to \mathbf{D}^{1/2}(\cdot, \mathbf{u}) \mbox{ in }L^{p}(0,T; L^{2q}(\Omega)^{d \times d})\,,\; \forall p < \infty, \forall q < \frac{d}{d-1}, \label{eq:dcntodc1}
\end{equation}
which, together with \eqref{eq:cntocweakh1} and Estimate \eqref{eq:meascor1}, allow us to apply Lemma \ref{lem:ws} with $v_{n} = \nabla \tilde{c}_{n}$, $w_{n} = \mathbf{D}^{1/2}(\cdot, \tilde{\mathbf{u}}_{n})$ and $a=b=r_{1}=r_{2}=s_{2}=2$, $s_{1}=2q \geq 2$. We obtain
\begin{equation}
\mathbf{D}^{1/2}(\cdot, \widetilde{\mathbf{u}}_{n})\nabla \widetilde{c}_{n} \to \mathbf{D}^{1/2}(\cdot, \mathbf{u})\nabla c \mbox{\quad weakly in $L^{2}(0,T; L^{2}(\Omega)^{d})$}. \label{eq:dcntodc2}
\end{equation}
Using \eqref{eq:dcntodc1}, \eqref{eq:dcntodc2} and Estimate \eqref{eq:meascor2} we apply Lemma \ref{lem:ws} again with $w_{n} = \mathbf{D}^{1/2}(\cdot, \widetilde{\mathbf{u}}_{n})$, $v_{n} = \mathbf{D}^{1/2}(\cdot, \widetilde{\mathbf{u}}_{n})\nabla \widetilde{c}_{n}$ and $a=r_{1}=r_{2}=s_{2}=2$, $s_{1}=2q\geq2$ and $b=r$ which yields
\begin{equation}
\mathbf{D}(\cdot, \widetilde{\mathbf{u}}_{n})\nabla \widetilde{c}_{n} \to \mathbf{D}(\cdot, \mathbf{u})\nabla c \mbox{ weakly in $L^{2}(0,T; L^{r}(\Omega)^{d})$}\,,\;
\forall r < \frac{2d}{2d-1}, \label{eq:dcntodc}
\end{equation}
and proves in particular \eqref{eq:measdgradc}. 

\begin{remark}
This splitting technique for obtaining convergence of the diffusion-dispersion term fills an apparent gap in the proof of \cite[Lemma 8]{ch04} or in the passing to the limit in \cite[Eq. (4.6)]{ch04}.
Using our notation, it seems to us that the weak convergence of  $\mathbf{D}(\cdot,\widetilde{\mathbf{u}}_n)\nabla\widetilde{c}_n$
and $\widetilde{\mathbf{u}}_n\cdot\nabla\widetilde{c}_n$
are justified in this reference by simply invoking 
the weak convergence of $\nabla \widetilde{c}_n$ in $L^2(\O\times (0,T))$ and the
strong convergence of $\widetilde{\mathbf{u}}_n$ in $L^q(\O\times(0,T))$ for $q<\frac{d+2}{d+1}$.
However, since $\frac{d+2}{d+1}<2$, a direct weak-strong convergence
argument cannot be applied and so more sophisticated reasoning is necessary.
The argument we propose above (the splitting of $\mathbf{D}(\cdot,\widetilde{\mathbf{u}}_n)$
into $\mathbf{D}(\cdot,\widetilde{\mathbf{u}}_n)^{1/2}\mathbf{D}(\cdot,\widetilde{\mathbf{u}}_n)^{1/2}$
and the double application of Lemma \ref{lem:ws})
can be adapted to the proof of the convergence of $\widetilde{\mathbf{u}}_n\cdot\nabla\widetilde{c}_n$
by writing
\[
\widetilde{\mathbf{u}}_n\cdot\nabla\widetilde{c}_n=(1+|\widetilde{\mathbf{u}}_n|^{1/2})
\frac{\widetilde{\mathbf{u}}_n}{1+|\widetilde{\mathbf{u}}_n|^{1/2}}\cdot\nabla\widetilde{c}_n
\]
and by using Lemma \ref{lem:ws} twice along with the fact that
$(\frac{\widetilde{\mathbf{u}}_n}{1+|\widetilde{\mathbf{u}}_n|^{1/2}}\cdot\nabla\widetilde{c}_n)_{n\in\NN}$ is bounded in
$L^2(\O\times(0,T))$.
\end{remark}

Suppose now that $\varphi \in L^{2}(0,T; W^{1, s}(\Omega)$ for some $s > 2d$. From \eqref{eq:regpara}, 
\begin{equation*}
\begin{array}{l}
\dsp \int_{0}^{T}\langle \Phi \partial_{t}\widetilde{c}(\cdot, t), \varphi(\cdot, t)\rangle_{(W^{1, 4})', W^{1, 4}}\ud t \\
\dsp + \int_{0}^{T}\int_{\Omega} \mathbf{D}(x, \widetilde{\mathbf{u}}(x, t))\nabla \widetilde{c}(x, t) \cdot \nabla \varphi(x, t)\ud x \ud t \\
\dsp - \int_{0}^{T}\int_{\Omega} \widetilde{\mathbf{u}}(x, t)\cdot \nabla \varphi(x, t)\widetilde{c}(x, t) \ud x \ud t \\
\dsp + \int_{0}^{T}\int_{\Omega} \widetilde{c}(x, t)\varphi(x, t)b(x, t)\nu(x)\ud x \ud t \\
\dsp =\int_{0}^{T}\int_{\Omega} \hat c(x, t)\varphi(x, t)a(x, t)\nu(x) \ud x \ud t ,
\end{array}
\end{equation*}
which we write as 
\begin{equation*}
T_{1} + T_{2} + T_{3} + T_{4} = T_{5}.
\end{equation*}
We pass to the limit on $T_{1}$ using \eqref{eq:dtcntoc} and on $T_{2}$ using \eqref{eq:dcntodc}. For $T_{3}$, we use \eqref{eq:cntoclinfty} and $\widetilde{\mathbf{u}}_{n} \cdot \nabla \varphi \to \mathbf{u}\cdot \nabla \varphi$ in $L^{1}(0,T; L^{1}(\Omega))$, which follows from \eqref{eq:untoustrong} and the choice of test function. We handle $T_{5}$ in a similar manner to the source terms in the elliptic equation using dominated convergence in time.

It remains to pass to the limit on $T_{4}$. Note that by the preceding arguments, $T_{4}$ has a limit as $n \to \infty$. To identify this limit, we refer the reader to \cite[Lemma 5.1]{fg00}, where the authors demonstrate that we can specify $c$ on null sets with respect to the Lebesgue measure in such a way that the last two lines of \eqref{eq:measc} are satisfied. This proves \eqref{eq:measpara} and thus concludes the proof of Theorem \ref{th:main}.
\appendix


\section{Some useful lemmas} \label{sec:app}

We present here the technical lemmas that we use to pass to the limit on the diffusion-dispersion term.

\begin{lemma} \label{lem:h}
Let $\Omega$ be a bounded subset of $\RR^{N}$, $N \in \NN$ and for each $n \in \NN$, let $H_{n}: \Omega \times \RR^{N} \to \RR$ be a Carath\'eodory function such that
\begin{itemize}
\item there exist positive constants $\ctel{hlem}$, $\gamma$ such that for a.e. $x \in \Omega$, 
\begin{equation}
|H_{n}(x, \xi)| \leq \cter{hlem}( 1 + |\xi|^{\gamma}), \mbox{\quad $\forall \xi \in \RR^{N}$, $\forall n \in \NN$,} \label{eq:lem1}
\end{equation}
\item there is a Carath\'eodory function $H: \Omega \times \RR^{N} \to \RR$ such that for a.e. $x \in \Omega$, 
\begin{equation}
H_{n}(x, \cdot) \to H(x, \cdot) \mbox{\quad uniformly on compact sets as $n \to \infty$.} \label{eq:lem2}
\end{equation} 
\end{itemize}
If $p \in [\gamma, \infty)$ and $(u_{n})_{n \in \NN} \subset L^{p}(\Omega)^{N}$ is a sequence with $u_{n} \to u$ in $L^{p}(\Omega)^{N}$ as $n \to \infty$, then
\begin{equation*}
H_{n}(\cdot, u_{n}) \to H(\cdot, u) \mbox{\quad in $L^{p/\gamma}(\Omega)$ as $n \to \infty$.}
\end{equation*}
\end{lemma}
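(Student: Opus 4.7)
The plan is to combine the standard subsequence (Urysohn) principle with Lebesgue's dominated convergence theorem. First I would reduce to a pointwise setting: since $u_n \to u$ in $L^p(\Omega)^N$, the classical converse to dominated convergence yields, along any chosen subsequence, a further subsequence (still denoted $(u_n)$) such that $u_n \to u$ almost everywhere on $\Omega$ and $|u_n| \le g$ a.e.\ for some dominating function $g \in L^p(\Omega)$. Because the target limit $H(\cdot, u)$ will not depend on the extraction, it suffices to prove $L^{p/\gamma}$ convergence along such a subsequence; the full-sequence conclusion then follows from Urysohn's principle.

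Next I would establish pointwise a.e.\ convergence of $H_n(\cdot, u_n)$ to $H(\cdot, u)$. Fix $x$ in the full-measure set where simultaneously $u_n(x) \to u(x)$, the bound \eqref{eq:lem1} holds, and the uniform-on-compacts convergence \eqref{eq:lem2} holds. The set $K_x := \{u_n(x) : n \in \NN\} \cup \{u(x)\}$ is compact in $\RR^N$, so splitting
\[
|H_n(x, u_n(x)) - H(x, u(x))| \le \sup_{\xi \in K_x} |H_n(x, \xi) - H(x, \xi)| + |H(x, u_n(x)) - H(x, u(x))|
\]
makes the first term vanish by \eqref{eq:lem2} and the second by continuity of $H(x, \cdot)$. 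Then the growth bound \eqref{eq:lem1} gives the majorant
\[
|H_n(x, u_n(x))|^{p/\gamma} \le \cter{hlem}^{p/\gamma}(1 + |u_n(x)|^\gamma)^{p/\gamma} \le C'(1 + |u_n(x)|^p) \le C'(1 + g(x)^p),
\]
which lies in $L^1(\Omega)$ independently of $n$. Dominated convergence yields $H_n(\cdot, u_n) \to H(\cdot, u)$ in $L^{p/\gamma}(\Omega)$ along the extracted subsequence, and Urysohn upgrades this to convergence of the original sequence.

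The one subtle point, and the main thing to check carefully, is the transfer from the assumption ``$H_n(x,\cdot) \to H(x,\cdot)$ uniformly on compact sets'' with a moving argument $u_n(x)$: the uniform-on-compacts hypothesis applies to a fixed compact set at a fixed $x$, so one must first fix $x$, gather the values $u_n(x)$ into a bounded (hence relatively compact) set, and only then invoke uniform convergence. This is precisely why extracting a pointwise-a.e.\ convergent subsequence at the outset is indispensable, as merely $L^p$ convergence of $(u_n)$ would not keep the arguments in a common compact set. Everything else is a routine application of the dominated convergence theorem with the $L^1$ majorant $C'(1+g^p)$.
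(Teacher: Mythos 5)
Your proof is correct and follows essentially the same route as the paper's: both rely on the subsequence principle (you phrase it via Urysohn, the paper via contradiction), extract an a.e.-convergent, dominated subsequence using the partial converse to the dominated convergence theorem, and conclude with the growth bound \eqref{eq:lem1} as the $L^{p/\gamma}$ majorant. Your explicit compact-set argument for the pointwise convergence of $H_n(x,u_n(x))$ is a welcome elaboration of a step the paper leaves implicit.
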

\begin{proof}
First note that if $v \in L^{p}(\Omega)^{N}$, then $H_{n}(\cdot, v) \in L^{p/\gamma}(\Omega)$, since by \eqref{eq:lem1} there is $\ctel{hlem2}$ such that 
\begin{equation*}
\lVert H_{n}(\cdot, v)\lVert_{L^{p/\gamma}(\Omega)} \leq \cter{hlem2}(\mathrm{meas}(\Omega)^{\gamma/p} + \lVert v\lVert_{L^{p}(\Omega)^{N}}^{\gamma}).
\end{equation*}
Suppose that $H_{n}(\cdot, u_{n})$ does not converge to $H(\cdot, u)$ in $L^{p/\gamma}(\Omega)$. Then there is $\varepsilon > 0$ and a subsequence of $(H_{n}(\cdot, u_{n}))_{n \in \NN}$, still denoted by $(H_{n}(\cdot, u_{n}))_{n \in \NN}$, with 
\begin{equation}
\lVert H_{n}(\cdot, u_{n}) - H(\cdot, u)\lVert_{L^{p/\gamma}(\Omega)} \geq \varepsilon. \label{eq:contra} 
\end{equation}
Since $u_{n} \to u$ in $L^{p}(\Omega)^{N}$ as $n \to \infty$, the partial converse to the dominated convergence theorem establishes the existence of a subsequence of $(u_{n})_{n \in \NN}$, again still denoted by $(u_{n})_{n \in \NN}$, and a function $v \in L^{p}(\Omega)^{N}$ with $u_{n}(x) \to u(x)$ for a.e. $x \in \Omega$, and $|u_{n}(x)| \leq v(x)$ for a.e. $x \in \Omega$. Now, $H_{n}$ is Carath\'eodory, so together with \eqref{eq:lem2} it follows that $H_{n}(x, u_{n}(x)) \to H(x, u(x))$ for a.e. $x \in \Omega$. Furthermore, 
\begin{equation*}
|H_{n}(x, u_{n}(x))| \leq \cter{hlem}(1 + (v(x))^{\gamma}) \mbox{\quad for a.e. $x \in \Omega$},
\end{equation*}
so the dominated convergence theorem gives 
\begin{equation*}
H_{n}(\cdot, u_{n}) \to H(\cdot, u) \mbox{\quad in $L^{p/\gamma}(\Omega)$},
\end{equation*}
contrary to \eqref{eq:contra}.
\end{proof}
\begin{corollary} \label{cor:h}
Let $\Omega$, $H_n$ and $H$ satisfy the hypotheses of Lemma \ref{lem:h}. If $p, q \in [\gamma, \infty)$ and $(u_{n})_{n \in \NN} \subset L^{p}(0,T; L^{q}(\Omega)^{N})$ is a sequence with 
\[
u_n \to u \mbox{ in $L^{p}(0,T; L^{q}(\Omega)^{N})$ as $n \to \infty$, then}
\]
\begin{equation*}
H_{n}(\cdot, u_{n}) \to H(\cdot, u) \mbox{\quad in $L^{p/\gamma}(0,T; L^{q/\gamma}(\Omega))$ as $n \to \infty$.}
\end{equation*} 
\end{corollary}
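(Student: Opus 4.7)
The plan is to reduce this to Lemma \ref{lem:h} by extracting a subsequence along which the time slices converge in $L^q(\Omega)^N$ for a.e. $t$, applying Lemma \ref{lem:h} slicewise, and then invoking the dominated convergence theorem in time.  To upgrade from subsequence convergence to full-sequence convergence I would reuse the same contradiction argument as in the proof of Lemma \ref{lem:h} itself.

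Concretely, suppose for contradiction that $H_n(\cdot,u_n)\not\to H(\cdot,u)$ in $L^{p/\gamma}(0,T;L^{q/\gamma}(\Omega))$.  Then some subsequence (still indexed by $n$) satisfies
\[
\lVert H_n(\cdot,u_n)-H(\cdot,u)\lVert_{L^{p/\gamma}(0,T;L^{q/\gamma}(\Omega))}\ge\varepsilon>0.
\]
Since $u_n\to u$ in $L^p(0,T;L^q(\Omega)^N)$, the partial converse to the dominated convergence theorem (applied to the scalar sequence $t\mapsto \lVert u_n(\cdot,t)-u(\cdot,t)\lVert_{L^q(\Omega)^N}$ in $L^p(0,T)$) yields a further subsequence and a function $W\in L^p(0,T)$ such that $u_n(\cdot,t)\to u(\cdot,t)$ in $L^q(\Omega)^N$ for a.e.\ $t\in(0,T)$ and $\lVert u_n(\cdot,t)\lVert_{L^q(\Omega)^N}\le W(t)$ for a.e.\ $t$.

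For each such $t$, Lemma \ref{lem:h} (with $\Omega$ unchanged and $p$ there replaced by $q\ge\gamma$) gives $H_n(\cdot,u_n(\cdot,t))\to H(\cdot,u(\cdot,t))$ in $L^{q/\gamma}(\Omega)$.  The growth estimate \eqref{eq:lem1} furnishes the time-integrable majorant
\[
\lVert H_n(\cdot,u_n(\cdot,t))\lVert_{L^{q/\gamma}(\Omega)}\le \cter{hlem2}\bigl(\mathrm{meas}(\Omega)^{\gamma/q}+W(t)^{\gamma}\bigr),
\]
and the right-hand side lies in $L^{p/\gamma}(0,T)$ since $W^\gamma\in L^{p/\gamma}(0,T)$ by the assumption $p\ge\gamma$.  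The dominated convergence theorem in $t$ then yields $H_n(\cdot,u_n)\to H(\cdot,u)$ in $L^{p/\gamma}(0,T;L^{q/\gamma}(\Omega))$, contradicting the displayed lower bound and completing the proof.

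The only mild subtlety is the simultaneous subsequence extraction needed to secure both the a.e.\ $t$-convergence of $u_n(\cdot,t)$ in $L^q(\Omega)^N$ and the dominating function $W$; this is handled by the single invocation of the partial converse to dominated convergence above.  Everything else is a direct Fubini-style transfer of Lemma \ref{lem:h} from the spatial setting to the Bochner setting, so no genuinely new estimate is required.
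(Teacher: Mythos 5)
Your proposal is correct and follows essentially the same route as the paper: extract a subsequence via the partial converse to the dominated convergence theorem so that the time slices converge in $L^{q}(\Omega)^{N}$ with an $L^{p}(0,T)$ dominating function, apply Lemma \ref{lem:h} slicewise, and conclude by dominated convergence in time. The only difference is that you explicitly wrap this in the contradiction argument to upgrade from subsequence to full-sequence convergence (as the paper does in the proof of Lemma \ref{lem:h} itself but leaves implicit in the corollary), which is if anything slightly more careful.
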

\begin{proof}
Since $u_{n} \to u$ in $L^{p}(0,T; L^{q}(\Omega)^{N})$, the partial converse to the dominated convergence theorem in $L^{p}(0,T; L^{q}(\Omega)^{N})$ establishes the existence of a subsequence $(u_{n})_{n \in \NN}$, still denoted by $(u_{n})_{n \in \NN}$, and a function $v \in L^{p}(0, T)$ such that for a.e. $t \in (0, T)$,
\begin{align*} 
u_{n}(\cdot, t) \to u(\cdot, t) \mbox{\quad in $L^{q}(\Omega)^{N}$, and }\lVert u_{n}(\cdot, t) \lVert_{L^{q}(\Omega)^{N}} \leq v(t).
\end{align*}
Now, by Lemma \ref{lem:h}, for a.e. $t \in (0, T)$,
\begin{equation*}
H_{n}(\cdot, u_{n}(\cdot, t)) \to H(\cdot, u(\cdot, t)) \mbox{\quad in $L^{q/\gamma}(\Omega)$.}
\end{equation*}
Furthermore,
\begin{equation*}
\lVert H_{n}(\cdot, u_{n}(\cdot, t))\lVert_{L^{q/\gamma}(\Omega)} \leq \cter{hlem}(1 + (v(t))^{\gamma}) \mbox{\quad for a.e. $t \in (0, T)$},
\end{equation*}
so the dominated convergence theorem gives
\begin{equation*}
H_{n}(\cdot, u_{n}) \to H(\cdot, u) \mbox{\quad in $L^{p/\gamma}(0,T; L^{q/\gamma}(\Omega))$},
\end{equation*}
which concludes the proof. 
\end{proof}
\begin{lemma} \label{lem:ws}
Let $\Omega$ be a bounded, open subset of $\RR^{N}$ and for each $n \in \NN$, let $w_{n}: \Omega \times (0, T) \to \RR$ and $v_{n}: \Omega \times (0, T) \to \RR$ be such that 
\begin{align*}
w_{n} &\to w \mbox{\quad in $L^{r_{1}}(0,T; L^{s_{1}}(\Omega))$, and}\\
v_{n} &\to v \mbox{\quad weakly in $L^{r_{2}}(0,T; L^{s_{2}}(\Omega))$,}
\end{align*}
where $r_{1}, r_{2}, s_{1}, s_{2} \geq 1$ are such that $1/r_{1} + 1/r_{2} \leq 1$ and $1/s_{1} + 1/s_{2} \leq 1$. Suppose also that the sequence $(w_{n}v_{n})_{n \in \NN}$ is bounded in $L^{a}(0,T; L^{b}(\Omega))$, where $a,b \in (1, \infty)$. Then
\begin{equation*}
w_{n}v_{n} \to wv \mbox{\quad weakly in $L^{a}(0,T; L^{b}(\Omega))$.}
\end{equation*}
\end{lemma}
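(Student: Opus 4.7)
My strategy is to first identify the limit of $w_n v_n$ in a weak distributional sense, then use boundedness in the reflexive space $L^a(0,T;L^b(\Omega))$ together with a subsequence argument to upgrade this to weak convergence in the target space. The only nontrivial ingredient is ensuring that the Hölder conditions on the exponents make pairing of strong and weak convergences legitimate against suitable test functions.

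\textbf{Step 1: Convergence against smooth compactly supported test functions.} Let $\varphi\in C_c^\infty(\Omega\times(0,T))$. I rewrite
\[
\int_0^T\!\!\int_\Omega w_n v_n\varphi\,\ud x\ud t = \int_0^T\!\!\int_\Omega v_n\,(w_n\varphi)\,\ud x\ud t.
\]
Since $\varphi$ is bounded with compact support in $\Omega\times(0,T)$, the strong convergence $w_n\to w$ in $L^{r_1}(0,T;L^{s_1}(\Omega))$ gives $w_n\varphi\to w\varphi$ in the same space. By Hölder's inequality on the compact set $\mathrm{supp}(\varphi)$ (which has finite measure in both time and space), the assumptions $1/r_1+1/r_2\le 1$ and $1/s_1+1/s_2\le 1$ translate into the continuous embedding $L^{r_1}(0,T;L^{s_1}(\mathrm{supp}(\varphi)))\hookrightarrow L^{r_2'}(0,T;L^{s_2'}(\mathrm{supp}(\varphi)))$, where the primes denote conjugate exponents. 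Hence $w_n\varphi\to w\varphi$ in $L^{r_2'}(0,T;L^{s_2'}(\Omega))$, and the weak convergence $v_n\rightharpoonup v$ in $L^{r_2}(0,T;L^{s_2}(\Omega))$ yields
\[
\int_0^T\!\!\int_\Omega w_n v_n\varphi\,\ud x\ud t \longrightarrow \int_0^T\!\!\int_\Omega wv\,\varphi\,\ud x\ud t.
\]

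\textbf{Step 2: Subsequence extraction in the reflexive target space.} Since $a,b\in(1,\infty)$, the space $L^a(0,T;L^b(\Omega))$ is reflexive, so the assumed boundedness of $(w_n v_n)$ provides a subsequence converging weakly in $L^a(0,T;L^b(\Omega))$ to some limit $z$. Testing this weak convergence against $\varphi\in C_c^\infty(\Omega\times (0,T))\subset L^{a'}(0,T;L^{b'}(\Omega))$ and comparing with Step 1 gives $z=wv$ as distributions, hence a.e.

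\textbf{Step 3: Convergence of the full sequence.} The preceding argument applies to any subsequence of $(w_n v_n)$: every subsequence admits a further subsequence converging weakly in $L^a(0,T;L^b(\Omega))$, and the limit is necessarily $wv$. By a standard Urysohn-type argument in the reflexive space, the whole sequence converges weakly to $wv$ in $L^a(0,T;L^b(\Omega))$.

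\textbf{Main obstacle.} The one point requiring care is Step 1, where the two Hölder-type conditions on the exponents $(r_i,s_i)$ must be used precisely to upgrade strong convergence of $w_n$ to strong convergence of $w_n\varphi$ in the predual of the space in which $v_n$ converges weakly. The compact support of $\varphi$ is essential here, as it is what allows one to swap integrability exponents freely via Hölder.
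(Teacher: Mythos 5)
Your proof is correct and follows essentially the same route as the paper's: extract a weak limit in the reflexive space $L^{a}(0,T;L^{b}(\Omega))$ from the boundedness assumption, identify it as $wv$ by a weak--strong (distributional) pairing argument made legitimate by the H\"older conditions on the exponents, and conclude for the whole sequence by uniqueness of the limit. The only cosmetic difference is that you attribute the exponent-swapping to the compact support of $\varphi$, whereas it really comes from the finiteness of the measure of $\Omega\times(0,T)$ itself, which is how the paper phrases the embedding $L^{r_{1}}(0,T;L^{s_{1}}(\Omega))\hookrightarrow (L^{r_{2}}(0,T;L^{s_{2}}(\Omega)))'$.
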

\begin{proof}
The assumption that $(w_{n}v_{n})_{n \in \NN}$ is bounded in $L^{a}(0,T; L^{b}(\Omega))$ yields the existence of $\chi \in L^{a}(0,T; L^{b}(\Omega))$ such that, up to a subsequence, 
\begin{equation*}
w_{n}v_{n} \to \chi \mbox{\quad weakly in $L^{a}(0,T; L^{b}(\Omega))$},
\end{equation*}
which implies that $w_{n}v_{n} \to \chi$ in the sense of distributions on $\Omega \times (0, T)$. From the other hypotheses, $L^{r_{1}}(0,T; L^{s_{1}}(\Omega)) \hookrightarrow L^{r_{2}}(0,T; L^{s_{2}}(\Omega))'$, so using weak-strong convergence we have $w_{n}v_{n} \to wv$ in the sense of distributions on $\Omega \times (0, T)$, hence $\chi = wv$. 
\end{proof}

\bibliography{peaceman-measure.bib}
\bibliographystyle{siam}

\end{document}